\documentclass[11pt]{article}

\usepackage{hyperref}
\usepackage{caption}
\usepackage{enumerate}
\usepackage{subcaption}
\usepackage{amsmath}
\usepackage{pgf,tikz}
\usetikzlibrary{arrows}
\usepackage{amssymb}
\usepackage{amstext}
\usepackage{wrapfig}
\usepackage{amsthm}
\usepackage{a4wide}
\usepackage{color}
\usepackage{url}
\usepackage{stmaryrd}
\usepackage{graphicx}
\usepackage{algorithmic}
\usepackage{algorithm}
\usepackage[top=3.5cm,a4paper]{geometry}
\usepackage{mathtools}

\makeatletter
\DeclareRobustCommand\widecheck[1]{{\mathpalette\@widecheck{#1}}}
\def\@widecheck#1#2{%
    \setbox\z@\hbox{\m@th$#1#2$}%
    \setbox\tw@\hbox{\m@th$#1%
       \widehat{%
          \vrule\@width\z@\@height\ht\z@
          \vrule\@height\z@\@width\wd\z@}$}%
    \dp\tw@-\ht\z@
    \@tempdima\ht\z@ \advance\@tempdima2\ht\tw@ \divide\@tempdima\thr@@
    \setbox\tw@\hbox{%
       \raise\@tempdima\hbox{\scalebox{1}[-1]{\lower\@tempdima\box
\tw@}}}%
    {\ooalign{\box\tw@ \cr \box\z@}}}
\makeatother

\captionsetup{position=bottom}

\newcommand{\calB}{\mathcal{B}}

\newcommand{\calN}{N}

\newcommand{\calU}{\mathcal{U}}
\newcommand{\calV}{\mathcal{V}}

\newcommand{\calY}{\mathcal{Y}}

\renewcommand{\tilde}{\widetilde}

\renewcommand{\bar}{\overline}

\newcommand{\R}{{\mathbb R}}
\newcommand{\C}{{\mathbb C}}

\newtheorem{theorem}{\bf Theorem}[section]

\newtheorem{lemma}[theorem]{\bf Lemma}
\newtheorem{remark}[theorem]{\bf Remark}

\newtheorem{corollary}[theorem]{\bf Corollary}
\newtheorem{assume}[theorem]{\bf Assumption}

\newtheorem{example}[theorem]{\bf Example}


\newcommand{\ycal}{\mathcal{Y}}
\newcommand{\ycallb}{\mathcal{Y}_{\mathrm{LB}}}
\newcommand{\ycalub}{\mathcal{Y}_{\mathrm{UB}}}
\newcommand{\ycalsub}{\mathcal{Y}_{\mathrm{SUB}}}
\newcommand{\ycalv}{\mathcal{Y}_{\mathcal{V}}}

\newcommand{\lmin}{\lambda_{\mathrm{min}}}
\newcommand{\lminmu}{\lambda_{\mathrm{min}}(A(\mu))}

\newcommand{\llb}{\lambda_{\mathrm{LB}}}
\newcommand{\lub}{\lambda_{\mathrm{UB}}}
\newcommand{\llbmu}{\lambda_{\mathrm{LB}}(\mu)}

\newcommand{\lslb}{\lambda_{\mathrm{SLB}}}
\newcommand{\lsub}{\lambda_{\mathrm{SUB}}}

\newcommand{\muind}{{\mu \in D}}
\newcommand{\image}{\mathrm{im}}

\DeclareMathOperator*{\argmin}{arg\,min}
\DeclareMathOperator*{\argmax}{arg\,max}
\DeclareMathOperator{\spn}{span}

\makeatletter
\newcommand\resetsubfigs{\setcounter{sub\@captype}{0}}
\makeatother

\begin{document}

\title{Subspace acceleration for large-scale \\ parameter-dependent Hermitian eigenproblems	
\thanks{Supported by the SNF research module \textit{A Reduced Basis Approach to Large-Scale Pseudospectra Computations} within the SNF ProDoc \textit{Efficient Numerical Methods for Partial Differential Equations}.
}}

\author{ 
	Petar Sirkovi\'{c}\thanks{ANCHP, MATHICSE, EPF Lausanne, Switzerland.
    {\tt petar.sirkovic@epfl.ch}} %
	\and %
	Daniel Kressner\thanks{ANCHP, MATHICSE, EPF Lausanne, Switzerland.
    {\tt daniel.kressner@epfl.ch}} %
}

\maketitle

\begin{abstract}
This work is concerned with approximating the smallest eigenvalue
of a parameter-dependent Hermitian matrix $A(\mu)$ for many parameter values $\mu \in \R^P$.
The design of reliable and efficient algorithms for addressing this task is of importance in a
variety of applications. Most notably, it plays a crucial role in estimating the error of reduced basis methods for parametrized partial differential equations. The current state-of-the-art approach, the so called Successive Constraint Method (SCM), addresses affine linear parameter dependencies by combining sampled Rayleigh quotients with linear programming techniques.
In this work, we propose a subspace approach that additionally incorporates the sampled eigenvectors
of $A(\mu)$ and implicitly exploits their smoothness properties.
Like SCM, our approach results in rigorous
lower and upper bounds for the smallest eigenvalues on $D$. 
Theoretical and experimental evidence is given to demonstrate that our approach
represents a significant improvement over SCM in the sense that the bounds 
are often much tighter, at negligible additional cost.
\end{abstract}

\smallskip\noindent
\textbf{Keywords.} parameter-dependent eigenvalue problem, Hermitian matrix, subspace acceleration, 
Successive Constraint Method, quadratic residual bound



\section{Introduction}

Let $A:D\rightarrow \C^{\calN \times \calN}$ be a matrix-valued function on a compact subset 
$D \subset \R^P$ such that $A(\mu)$ is Hermitian for every $\mu\in D$. 
We aim at approximating the smallest eigenvalue,
\begin{equation} \label{eq:par_her_eigp}
	\lambda_{\min}( A(\mu) ),  \qquad \muind,
\end{equation}
of $A(\mu)$ for \emph{many} different values of $\muind$. We consider a large-scale setting, where applying a standard eigensolver, such as the Lanczos method~\cite{Bai2000}, is computationally feasible for a few values of $\mu$ but would become too expensive for many (e.g., thousand) parameter values.
Guiding our developments, an important application of~\eqref{eq:par_her_eigp} consists of estimating the coercivity constant for parametrized elliptic partial differential equations (PDEs), see, e.g.,~\cite{RozH08}.
In turn, these estimates can be used to construct reliable a posteriori error estimates in 
the reduced basis method (RBM) for solving such PDEs.
For more general PDEs, the coercivity constant needs to be replaced 
by the inf-sup constant, which -- after discretization -- corresponds to the smallest singular
value of a general nonsymmetric matrix $A(\mu)$ or, equivalently, to the smallest eigenvalue of the Hermitian matrix $A(\mu)^*A(\mu)$.
Other applications that require the solution of such parameter-dependent eigenvalue and singular value problems include
the computation of pseudospectra~\cite[Part IX]{Trefethen2005}, the method of 
particular solutions~\cite{Betcke2005}, and the eigenvalue analysis of waveguides~\cite{Engstrom2009}.
The related problem of optimizing the smallest eigenvalue(s) of a parameter-dependent Hermitian matrix
appears in a large variety of applications: One-parameter optimization problems play a critical role in the
design of numerical methods~\cite{Rojas2000} and robust control~\cite{Lewis1996};
multi-parameter optimization problems arise from semidefinite programming~\cite{Helmberg2000a} and graph partitioning~\cite{Kim2006,Ghosh2008}.

Without any further assumptions on the dependence of $A(\mu)$ on $\mu$, the solution of~\eqref{eq:par_her_eigp} is
computationally intractable, especially when $P$ is large.
An assumption commonly found in RBM is that $A(\mu)$ admits an affine linear decomposition
with respect to $\mu$.
\begin{assume}[Affine linear decomposition]\label{assum:aff_lin_dec} 
Given $Q \in \mathbb N$, the Hermitian matrix $A(\mu)$ admits a decomposition of the form
\begin{equation} \label{eq:affinelinear}
 	A(\mu) = \theta_1(\mu)A_1 + \dots + \theta_Q(\mu)A_Q, \quad  \forall \muind,
\end{equation}
for Hermitian matrices $A_1,\dots,A_Q \in \C^{\calN\times\calN}$ and functions $\theta_1,\dots,\theta_Q: D\mapsto \R$.
\end{assume}
Assumption~\ref{assum:aff_lin_dec} holds with small $Q$ for a number of important applications, including PDEs with parametrized coefficients on disjoint subdomains~\cite{RozH08}. 
Even when $A(\mu)$ does not satisfy this assumption, it may still be possible to approximate it very well by a short affine linear decomposition using, e.g., the Empirical Interpolation Method~\cite{Barrault2004}.

One of the simplest approaches to address~\eqref{eq:par_her_eigp} is to use Gershgorin's theorem~\cite{Johnson1989} for estimating the smallest eigenvalue, but the accuracy of the resulting estimate is usually insufficient and limits the scope of applications severely. Within the context of RBM, a number of approaches have been developed that
go beyond this simple estimate by making use of Assumption~\ref{assum:aff_lin_dec}. For example, eigenvalue  perturbation analysis can be used to 
locally approximate the smallest eigenvalues~\cite{Nguyen2005,Veroy2002}. 
The Successive Constraint Method (SCM; see \cite{Huynh2007})
is currently the most commonly used approach within RBM, probably due to its generality and relative simplicity. Variants of SCM for computing smallest singular values can be found in~\cite{Sen2006,Huynh2010}, while
an extension of SCM to non-linear problems and alternative heuristic strategies have been
proposed in~\cite{Manzoni2014}. 

If $A$ depends analytically on $\mu$ then the smallest eigenvalue inherits this property \emph{if} $\lminmu$ remains simple~\cite{Kato1995}. As shown in~\cite{Andreev2012c}, the analyticity can be used to approximate $\lminmu$ very well
by high-order Legendre polynomials (for $P = 1$) or sparse tensor products of Legendre polynomials (for $P>1$ if $D$ is a hypercube). Requiring $\lminmu$ to stay simple on the whole of $D$ is, however, a rather strong condition. In general,
there are eigenvalue crossings at which $\lminmu$ is Lipschitz continuous only; see~\cite{Mengi2014} for a recently
proposed eigenvalue optimization method that takes this piecewise regularity of $\lminmu$ into account. 
For larger $P$, keeping track of eigenvalue crossings explicitly appears to be a rather daunting task and we therefore aim
at a method for solving~\eqref{eq:par_her_eigp} that benefits only implicitly from piecewise regularity.

The approach proposed in this paper can be summarized as follows. Given $J$
parameter samples $\mu_1,\mu_2,\dots,\mu_J$, we consider the subspace $\mathcal V$ containing 
eigenvectors belonging to one or several smallest eigenvalues of $A(\mu_i)$ for $i = 1,\ldots, J$.
The smallest Ritz value of $A(\mu)$ with respect to $\mathcal V$ immediately yields an upper bound
for $\lambda_{\min}(A(\mu))$. A lower bound is obtained by combining this upper bound with a perturbation argument. To apply such an argument requires, however, knowledge on the involved eigenvalue gap. We show that this gap can be estimated by adapting the linear programming approach used in SCM. The difference between upper and lower bounds constitutes the error estimate that drives the greedy strategy for selecting
the next parameter sample $\mu_{J+1}$. The whole procedure is stopped once the error estimate is uniformly small on $D$ or, rather, on a surrogate of $D$.

As we will see in the numerical experiments, our subspace approach accelerates convergence significantly compared to SCM. Subspace approaches based on additional conditions on the parameter dependencies have been proposed in~\cite{Machiels2000,Prudhomme2002}. In the context of eigenvalue optimization problems, subspace acceleration has been discussed  in~\cite{DeVlieger2012,Kressner2014e}.

The rest of this paper is organized as follows.
In Section~\ref{sec:scm}, we first give a brief overview of SCM. We then discuss its interpolation properties and point out a limitation on the quality of the lower bounds that can possibly be attained when solely using the information taken into account by SCM.
In Section~\ref{sec:subspace}, we present our novel subspace-accelerated approach for solving~\eqref{eq:par_her_eigp}.
Furthermore, we show that the new approach has better interpolation properties than SCM.
Motivated by the fast convergence of the upper bounds in the novel approach, we also introduce residual-based lower bounds which are less reliable but sometimes converge much faster.  
In Sections~\ref{sec:examples} and~\ref{sec:infsup}, we present numerical experiments and discuss the application of our algorithm to the computation of coercivity and inf-sup constants.

\section{Successive Constraint Method} \label{sec:scm}
In the following, we recall the Successive Constraint Method (SCM)
from~\cite{Huynh2007} and derive some new properties. The basic idea
of SCM is to construct reduced-order models for~\eqref{eq:par_her_eigp} that 
allow the efficient evaluation of of lower and upper bounds for $\lminmu$. 
Being a consequence of Assumption~\ref{assum:aff_lin_dec}, the following characterization
of the smallest eigenvalue is central to SCM:
\begin{eqnarray} 
		\lminmu &=& \min_{u \in \C^{\mathcal{N}} \atop u \not=0} \frac{u^*A(\mu)u}{u^*u} \label{eq:rayleigh_qoutient}
				= \min_{u\in \C^\calN \atop u \not=0} \sum_{q=1}^Q \theta_q(\mu) \frac{u^*A_q u}{u^*u} \nonumber \\
&=& \min_{u\in \C^\calN \atop u \not=0} \theta(\mu)^TR(u)
				= \min_{y \in \calY} \theta(\mu)^Ty \label{eq:num_range_lp},
\end{eqnarray}
where we have defined the vector-valued functions 
$\theta: D \to \R^Q$, 
$R:\C^\calN\setminus \{0\}\rightarrow \R^Q$ as
\begin{equation}\label{eq:mapping_numerical_range}
	\theta(\mu):= \left[ \theta_1(\mu),\ \ldots,\ \theta_Q(\mu) \right]^T, \qquad R(u) := \left[\frac{u^*A_1u}{u^*u},\ \dots,\ \frac{u^*A_Qu}{u^*u}\right]^T,
\end{equation}
and set $\calY := \image(R)$. 
It follows from~\eqref{eq:num_range_lp} that the computation of $\lminmu$ is equivalent to optimizing a linear functional over $\calY$.
The constraint set $\calY$ is called the \emph{joint numerical range} of $A_1,\dots,A_Q$,
which is generally not convex; see \cite{Gutkin2004}. Thus standard optimization techniques cannot be used to reliably solve~\eqref{eq:num_range_lp}. 
In SCM, the set $\calY$ is approximated from above and from below
by convex polyhedra. In turn, this allows for the use of linear programming (LP) techniques to yield lower and upper bounds.

\subsection{Basic idea of SCM} \label{sec:scm_idea}

Given $J$ parameter values $C_J = \{\mu_1,\dots,\mu_J\} \subset D$, let us suppose we have computed 
the corresponding eigenpairs $(\lambda_1,v_1),\dots,(\lambda_J,v_J)$, that is, $\lambda_i$ is the smallest eigenvalue of $A(\mu_i)$ with eigenvector $v_i \in \C^\calN$.
We now describe how SCM uses this information to approximate the set $\calY$ defined above.

Clearly, 
\begin{equation}\label{eq:upper_set}
	\ycalub(C_J):= \{ R(v_i): i=1,\dots,J \}
\end{equation}
is a subset of $\calY$. Optimizing~\eqref{eq:mapping_numerical_range}
over $\ycalub(C_J)$ instead of $\calY$ thus yields an \emph{upper bound} for 
$\lminmu$. Note that this is equivalent to optimizing over the convex hull of 
$\ycalub(C_J)$, since a solution of the LP can always be attained at a vertex
of the convex polyhedron.

To get a lower bound, we first define the bounding box
\begin{equation}\label{eq:bounding_box}
	\calB = \left[ \lambda_{\mathrm{min}}(A_1), \lambda_{\mathrm{max}}(A_1)\right]\times \cdots \times 
	\left[ \lambda_{\mathrm{min}}(A_Q), \lambda_{\mathrm{max}}(A_Q)\right] \subseteq \R^Q.
\end{equation}
By the minimax characterization of eigenvalues we have 
$\calY \subset \calB$, but this approximation is often too crude and we will instead work with
\[
	\ycallb(C_J):= \{ y\in \calB : \theta(\mu_i)^Ty \geq \lambda_i, i=1,\dots,J \}.
\]
The property $\calY \subset \ycallb(C_J)$
follows from the fact that every $y = R(u_y) \in \calY$ satisfies
$\theta(\mu_i)^Ty = u_y^* A(\mu_i) u_y / u_y^* u_y \ge \min_u u^* A(\mu_i) u / u^* u = \lambda_i$. The minimax characterization also implies that the convex polyhedron 
$\ycallb(C_J)$ is tangential to $\calY$. 

With the sets defined above, we let
\begin{equation} \label{eq:minimization_bound}
	\lub(\mu;C_J) := \min_{y\in \ycalub(C_J)} \theta(\mu)^Ty, \qquad 
	\llb(\mu;C_J) := \min_{y\in \ycallb(C_J)} \theta(\mu)^Ty.
\end{equation}
Since $\ycalub(C_J) \subseteq \calY \subseteq \ycallb(C_J)$, it follows that
\[
	\llb(\mu;C_J) \leq \lminmu \leq \lub(\mu;C_J)
\]
for every $\mu \in D$. While the evaluation of $\lub(\mu;C_J)$ is trivial, the evaluation of $\llb(\mu;C_J)$ requires the solution of an LP; see Figure~\ref{fig:scm_idea}
for an illustration.  


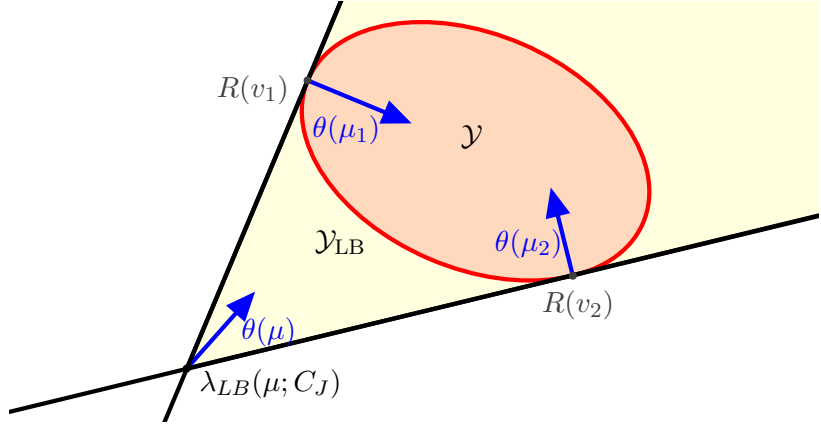
\begin{figure}[ht]
\begin{center}
\definecolor{ffffww}{rgb}{1,1,0.4}
\definecolor{xdxdff}{rgb}{0.49,0.49,1}
\definecolor{qqqqff}{rgb}{0,0,1}
\definecolor{uququq}{rgb}{0.25,0.25,0.25}
\definecolor{ffqqqq}{rgb}{1,0,0}

\begin{tikzpicture}[line cap=round,line join=round,>=triangle 45,x=1.0cm,y=1.0cm]
\clip(-3.63,-0.27) rectangle (7.01,5.29);
\fill[line width=1.6pt,color=ffffww,fill=ffffww,fill opacity=0.2] (4,13.16) -- (9.2,11.52) -- (20.29,5.68) -- (-1.3,0.43) -- cycle;
\draw [rotate around={-23.49:(2.5,3.31)},line width=1.6pt,color=ffqqqq,fill=ffqqqq,fill opacity=0.15] (2.5,3.31) ellipse (2.4cm and 1.55cm);
\draw [line width=1.6pt,domain=-3.63:7.01] plot(\x,{(--3.77--1.24*\x)/5.07});
\draw [line width=1.6pt,domain=-3.63:7.01] plot(\x,{(--5.64--3.82*\x)/1.59});
\draw [->,line width=1.6pt,color=qqqqff] (0.29,4.25) -- (1.66,3.68);
\draw [->,line width=1.6pt,color=qqqqff] (3.78,1.66) -- (3.49,2.83);
\draw (2.17,3.79) node[anchor=north west] {$\mathcal{Y}$};
\draw [->,line width=1.6pt,color=qqqqff] (-1.3,0.43) -- (-0.4,1.45);
\draw [line width=1.6pt,color=ffffww] (4,13.16)-- (9.2,11.52);
\draw [line width=1.6pt,color=ffffww] (9.2,11.52)-- (20.29,5.68);
\draw [line width=1.6pt] (20.29,5.68)-- (-1.3,0.43);
\draw [line width=1.6pt] (-1.3,0.43)-- (4,13.16);
\draw (0.25,2.4) node[anchor=north west] {$\mathcal{Y}_{\textrm{LB}}$};


\fill [color=black] (-1.3,0.43) circle (1.5pt);
\draw[color=black] (-0.2,0.23) node {$\lambda_{LB}(\mu;C_J)$};
\fill [color=uququq] (0.29,4.25) circle (1.5pt);
\draw[color=uququq] (-0.43,4.13) node {$R(v_1)$};
\fill [color=uququq] (3.78,1.66) circle (1.5pt);
\draw[color=uququq] (3.85,1.27) node {$R(v_2)$};
\draw[color=qqqqff] (0.80,3.59) node {$\theta(\mu_1)$};
\draw[color=qqqqff] (3.20,2.09) node {$\theta(\mu_2)$};
\draw[color=qqqqff] (-0.22,0.92) node {$\theta(\mu)$};
\end{tikzpicture}
\end{center}
                \caption{\label{fig:scm_idea}
                Illustration of the LP defining the lower bound $\llb(\mu;C_J)$ for $Q = 2$ and $J = 2$.
}
\end{figure}

\subsection{Error estimate and sampling strategy} \label{sec:scm_sampling}


Assessing the quality of the bounds~\eqref{eq:minimization_bound}
on the entire, usually continuous parameter domain $D$ is, in general,
an infeasible task. A common strategy in SCM, we substitute $D$ by
a training set $\Xi \subset D$ that contains finitely many (usually,
a few thousand) parameter samples. We then measure the quality
of the bounds by estimating the largest relative difference:
\begin{equation} \label{eq:error_ratio}
	\max_{\mu \in \Xi} \frac{\lub(\mu;C_J) - \llb(\mu;C_J)}{|\lub(\mu;C_J)|}.
\end{equation}

If~\eqref{eq:error_ratio} is not sufficiently small, SCM enlarges $C_J$
by a parameter that attains the maximum in~\eqref{eq:error_ratio}
and recomputes the bounds~\eqref{eq:minimization_bound}.
The resulting greedy sampling strategy is summarized in Algorithm~\ref{alg:SCM}.

\begin{algorithm}
\floatname{algorithm}{Algorithm}
\caption{Successive Constraint Method}
\begin{algorithmic}[1]
\REQUIRE Training set $\Xi$, affine linear decomposition such that  $A(\mu)=\theta_1(\mu)A_1+\dots+\theta_Q(\mu)A_Q$ is
Hermitian for every $\mu \in \Xi$. Relative error tolerance $\varepsilon_{\mathrm{SCM}}$.
\ENSURE Set $C_J \subset \Xi$ with corresponding eigenpairs $(\lambda_i,v_i)$, such that  \\ 	$\frac{\lub(\mu;C_J) - \llb(\mu;C_J)}{|\lub(\mu;C_J)|} < \varepsilon_{\mathrm{SCM}}$  for every $\mu \in \Xi$.
\STATE compute $\lambda_{\min}(A_q), \lambda_{\max}(A_q)$ for $q = 1,\ldots,Q$, defining $\calB$ according to~\eqref{eq:bounding_box}
\STATE $J=0$, $C_0=\emptyset$
\WHILE{ $\max\limits_{\mu \in \Xi} \frac{\lub(\mu;C_J) - \llb(\mu;C_J)}{|\lub(\mu;C_J)|} > \varepsilon_{\mathrm{SCM}}$  }
  \STATE $\mu_{J+1} \leftarrow \argmax\limits_{\mu \in \Xi} \frac{\lub(\mu;C_J) - \llb(\mu;C_J)}{|\lub(\mu;C_J)|} $
  \STATE $C_{J+1} \leftarrow C_J \cup \mu_{J+1}$
  \STATE recompute $\lub(\mu;C_{J+1})$ and $\llb(\mu;C_{J+1})$
  according to~\eqref{eq:minimization_bound}
  \STATE $J \leftarrow J+1$
\ENDWHILE
\end{algorithmic}
\label{alg:SCM}
\end{algorithm}

\subsection{Computational complexity} \label{sec:scm_complexity}

Let us briefly summarize the computations performed by SCM.
The bounding box $\calB$ for $\ycal$ needs to be determined initially by computing the smallest and the largest eigenvalues of 
$A_1,\dots,A_Q$. Since each iteration requires the computation of the smallest eigenpair $(\lambda_i,v_i)$ of $A(\mu_i)$,
this amounts to solving $2Q+J$ eigenproblems of size $\calN\times \calN$ in total.
Verifying the accuracy of the current approximation on $\Xi$ and selecting the next parameter sample 
requires computing $\lub(\mu;C_J)$ and $\llb(\mu;C_J)$ for all $\mu \in \Xi$. In total, this amounts to solving
$J|\Xi|$ LP problems with $Q$ variables and at most $2Q+J$ constraints.

\subsection{Interpolation results} \label{sec:scm_interpolation}

As also discussed in~\cite{Huynh2007}, it is immediate to see that the bounds produced by
SCM coincide with $\lminmu$ for all $\mu \in C_J$. The following theorem shows that the upper bounds also interpolate the derivatives of $\lminmu$ on $C_J$.

\begin{theorem}\label{thm:upper_hermite}
Let $C_J \subset D$ be finite and consider the upper bound $\lub(\mu;C_J)$ defined  in~\eqref{eq:minimization_bound}. Given $\mu_i \in C_J$ in the interior of $D$, assume that 
$\theta_1, \dots, \theta_Q: D \to \R$ are differentiable at $\mu_i$ and that $\lambda_i = \lmin(A(\mu_i))$ is a simple eigenvalue of $A(\mu_i)$. Then
	\[
		\nabla \lub (\mu_i;C_J) = \nabla \lmin (A(\mu_i)),
	\]
	with the gradient $\nabla$ with respect to $\mu$.
\end{theorem}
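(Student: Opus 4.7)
My plan is to express $\lub(\mu;C_J)$ as a pointwise minimum of explicit smooth functions, identify which one attains the minimum at $\mu_i$, and then invoke Hellmann--Feynman. Since $\ycalub(C_J) = \{R(v_1),\dots,R(v_J)\}$ is finite, a linear functional over it is minimized at a vertex, so
\[
  \lub(\mu;C_J) = \min_{j=1,\dots,J} g_j(\mu), \qquad g_j(\mu) := \theta(\mu)^T R(v_j) = \frac{v_j^* A(\mu) v_j}{v_j^* v_j}.
\]
Each $g_j$ is differentiable at $\mu_i$ by the assumed differentiability of the $\theta_q$. Note that $g_i(\mu_i) = \lambda_i$, whereas the Rayleigh--Ritz inequality gives $g_j(\mu_i) \ge \lmin(A(\mu_i)) = \lambda_i$ for every $j$. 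Thus $g_i$ is (possibly among others) an active function at $\mu_i$.

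The next step is to handle possible ties. Let $S = \{j : g_j(\mu_i) = \lambda_i\}$. For every $j \in S$, the vector $v_j$ attains the minimum of the Rayleigh quotient of $A(\mu_i)$. Since $\lambda_i$ is a simple eigenvalue, any such minimizer must be a nonzero scalar multiple of $v_i$. The Rayleigh quotient is invariant under nonzero scaling, so $R(v_j) = R(v_i)$ and therefore $g_j \equiv g_i$ on all of $D$. For $j \notin S$ we have $g_j(\mu_i) > \lambda_i$, so by continuity $g_j(\mu) > g_i(\mu)$ throughout some neighborhood of $\mu_i$. Combining these two observations, $\lub(\mu;C_J) = g_i(\mu)$ in a neighborhood of $\mu_i$, and hence $\nabla \lub(\mu_i;C_J) = \nabla g_i(\mu_i)$.

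Finally, differentiating $g_i$ directly with $v_i$ fixed yields
\[
  \nabla g_i(\mu_i) = \frac{1}{v_i^* v_i}\bigl[v_i^* A_1 v_i,\,\dots,\,v_i^* A_Q v_i\bigr]\nabla \theta(\mu_i),
\]
which, using the affine linear decomposition of $A(\mu)$, is exactly the Hellmann--Feynman expression $\nabla \lmin(A(\mu_i))$ valid under simplicity of $\lambda_i$ (see, e.g., Kato). This matches $\nabla g_i(\mu_i)$ and establishes the claimed equality. The only subtle point is the tie argument in the second paragraph, but simplicity of the smallest eigenvalue collapses ties to coincident Rayleigh quotients, so the apparent non-smoothness of the $\min$ operator causes no trouble.
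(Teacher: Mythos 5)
Your proposal is correct and follows essentially the same route as the paper: identify $R(v_i)$ as the (effectively unique, by simplicity of $\lambda_i$) minimizer over the discrete set $\ycalub(C_J)$ at $\mu_i$, conclude that $\lub(\cdot;C_J)=\theta(\cdot)^T R(v_i)$ in a neighbourhood, and match the resulting gradient with the Hellmann--Feynman derivative of a simple eigenvalue. Your explicit treatment of ties among the $g_j$ is a slightly more careful spelling-out of the paper's one-line claim that simplicity makes $y_i$ the unique minimizer, but the substance is identical.
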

\begin{proof}
	Let $v_i$ be an eigenvector associated with $\lambda_i$
	such that $\|v_i\|_2 = 1$
	and set $y_i:= R(v_i) \in \ycalub(C_J)$.
	By the definition~\eqref{eq:minimization_bound}, the relation
	\begin{equation} \label{eq:charlambdamin}
	 \lub(\mu; C_J) = \min_{y\in \ycalub(C_J)} \theta(\mu)^T y = \theta(\mu)^T y_i
	\end{equation}
	holds for $\mu = \mu_i$. The simplicity of $\lambda_i$ implies that
	$y_i$ is the unique minimizer. 
	Combined with the facts that $\ycalub(C_J)$ is a discrete set, $\mu_i$ is an interior point, and $\theta(\mu)$ is continuous at $\mu_i$, this implies that~\eqref{eq:charlambdamin} also holds for all $\mu$ in a neighbourhood $\Omega \subset D$ around $\mu_i$. Consequently,
	\[
	 \frac{\partial \lub}{\partial \mu^{(p)}} (\mu_i; C_J) =  \frac{\partial \theta}{\partial \mu^{(p)}} (\mu_i)^T y_i,
	\]
	where $\mu^{(p)}$ denotes the $p$th entry of $\mu$ for $p = 1,\ldots, P$.

	On the other hand, the well-known expression for the derivative of a simple eigenvalue~\cite{Horn2013} gives 
	\begin{eqnarray*}	
		\frac{\partial \lambda_{\mathrm{min}}}{\partial \mu^{(p)}}(A(\mu_i)) &=&
		v_i^* \frac{\partial A}{\partial \mu^{(p)}}(\mu_i) v_i
				= v_i^* \Big( \sum_{q=1}^Q \frac{\partial \theta_q}{\partial \mu^{(p)}}(\mu_i)  A_q \Big) v_i \\
				&=& \sum_{q=1}^Q \frac{\partial \theta_q}{\partial \mu^{(p)}}(\mu_i) \,  v_i^* A_q v_i  
				= \frac{\partial \theta}{\partial \mu^{(p)}}(\mu_i)^T y_i,
	\end{eqnarray*}
	which completes the proof.
\end{proof}

As the following example shows, the result of Theorem~\ref{thm:upper_hermite}
does not hold for the lower bounds produced by SCM.
%
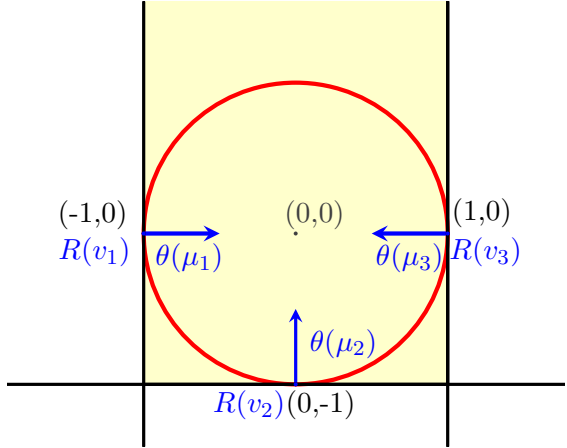
\begin{figure}[ht]
        \centering
        \definecolor{fffftt}{rgb}{1,1,0.2}
\definecolor{qqqqff}{rgb}{0,0,1}
\definecolor{ffqqqq}{rgb}{1,0,0}
\definecolor{uququq}{rgb}{0.25,0.25,0.25}
\begin{tikzpicture}[scale = 0.5, line cap=round,line join=round,>=stealth,x=1.0cm,y=1.0cm]
\clip(-7.58,-5.64) rectangle (7.07,6.14);
\fill[color=fffftt,fill=fffftt,fill opacity=0.25] (-4,10) -- (4,10) -- (4,-4) -- (-4,-4) -- cycle;
\draw [line width=1.6pt,color=ffqqqq] (0,0) circle (4cm);
\draw [line width=1.2pt] (4,-5.64) -- (4,6.14);
\draw [line width=1.2pt] (-4,-5.64) -- (-4,6.14);
\draw [line width=1.2pt,domain=-9.58:11.07] plot(\x,{(-32-0*\x)/8});
\draw [->, line width=1.6pt,color=qqqqff] (-4,0) -- (-2,0);
\draw [->,line width=1.2pt,color=qqqqff] (0,-4) -- (0,-2);
\draw [->,line width=1.6pt,color=qqqqff] (4,0) -- (2,0);
\draw [domain=-9.58:11.07] plot(\x,{(--80-0*\x)/8});
\draw (-4,10)-- (4,10);
\draw [line width=1.2pt] (4,10)-- (4,-4);
\draw [line width=1.2pt] (4,-4)-- (-4,-4);
\draw (-4,-4)-- (-4,10);
\fill [color=uququq] (0,0) circle (1.5pt);
\draw[color=uququq] (0.5,0.5) node {(0,0)};
\fill [color=black] (4,0) circle (1.5pt);
\draw[color=black] (4.9,0.55) node {(1,0)};
\fill [color=black] (-4,0) circle (1.5pt);
\draw[color=black] (-5.34,0.5) node {(-1,0)};
\draw[color=qqqqff] (-2.80,-0.65) node {$\theta(\mu_1)$};
\fill [color=black] (0,-4) circle (1.5pt);
\draw[color=black] (0.66,-4.55) node {(0,-1)};
\draw[color=qqqqff] (1.26,-2.92) node {$\theta(\mu_2)$};
\draw[color=qqqqff] (3.00,-0.65) node {$\theta(\mu_3)$};
\fill [color=qqqqff] (-4,0) circle (1.5pt);
\draw[color=qqqqff] (-5.29,-0.49) node {$R(v_1)$};
\fill [color=qqqqff] (0,-4) circle (1.5pt);
\draw[color=qqqqff] (-1.22,-4.55) node {$R(v_2)$};
\fill [color=qqqqff] (4,0) circle (1.5pt);
\draw[color=qqqqff] (5,-0.49) node {$R(v_3)$};
\end{tikzpicture}
                \caption{Joint numerical range $\ycal$ (red circle) and lower bound set $\ycallb(C_J)$ (yellow area) for the setting described in Example~\ref{ex:lb_non_interpolation}.}
                \label{fig:example_lower_bounds}
\end{figure}

\begin{example}\label{ex:lb_non_interpolation}
	For $\mu \in D:= \left[0,\pi \right]$, let 
	\[
	A(\mu) = \cos(\mu) A_1 + \sin(\mu) A_2 =  \cos(\mu) \begin{bmatrix} 1 & 0 \\ 0 & -1 \end{bmatrix} + \sin(\mu) \begin{bmatrix} 0 & -1 \\ -1 & 0 \end{bmatrix}.
	\]
It can be shown that $\ycal$, the joint numerical range of $A_1$ and $A_2$, equals the unit circle around 0.
Consider the sample set $C_3 = \{\mu_1,\mu_2,\mu_3\} = \{0,\frac{\pi}{2},\pi\}$,
	with $\lambda_{\mathrm{min}}(A(\mu_1))=\lambda_{\mathrm{min}}(A(\mu_2))=\lambda_{\mathrm{min}}(A(\mu_3))=-1$. The resulting lower bound set $\ycallb(C_3)$ is 
	the half-infinite box shown in Figure~\ref{fig:example_lower_bounds}. 
	When minimizing $\theta(\mu)^T y$ for $y \in \ycallb(C_3)$, the minimum is attained at the vertex $(-1,-1)$ for $\mu \in [\pi/4,\pi/2]$ and 
	at the vertex $(1,-1)$ for $\mu \in [\pi/2,3\pi/4]$.
	Hence,
	\[
		\llbmu = \left\{ 
		 \begin{array}{l l}
		 -\cos \mu - \sin \mu, & \text{ for } \mu \in [\pi/4,\pi/2],\\
		\cos \mu - \sin \mu, & \text{ for } \mu \in [\pi/2,3\pi/4],
		 \end{array} \right.
	\]
	yielding the following one-sided derivatives at $\mu = \pi/2$:
	\begin{eqnarray*}
		\lambda_{\mathrm{LB}}^\prime((\pi/2)^-) &=&
		\sin(\pi/2) - \cos(\pi/2)=1,\\
		\lambda_{\mathrm{LB}}^\prime((\pi/2)^+) &=& -\sin(\pi/2) - \cos(\pi/2)) =-1.
	\end{eqnarray*}
	In contrast, the exact eigenvalue is differentiable at $\pi/2$. Moreover,  $\lambda_{\mathrm{min}}^\prime(A(\pi/2)) = 0$ is different from both 
	one-sided derivatives of $\llbmu$.
\end{example}

Theorem~\ref{thm:upper_hermite} and Example~\ref{ex:lb_non_interpolation} indicate that the lower bounds produced by SCM are asymptotically less accurate than the upper bounds. Indeed,
this has been observed numerically~\cite{Huynh2010},
implying a need to find more accurate lower bounds. However, the following theorem
indicates that such an improvement is not possible without taking additional information on $A(\mu)$ into account.

\begin{theorem}
	\label{thm:lower_bound_limitations}
	Let $C_J=\{\mu_1,\dots,\mu_J\}\subseteq D$ and consider the lower bounds 
	$\llb(\mu;C_J)$ defined in~\eqref{eq:minimization_bound}
	for a Hermitian matrix function $A(\mu)$ in affine linear decomposition~\eqref{eq:affinelinear}. Let $\tilde \mu \in D$. If 
		$J<\calN$
	then there exist matrices $\bar A_1,\dots,\bar A_Q\in \C^{\calN\times\calN}$, defining $\bar A(\mu)=\theta_1(\mu)\bar A_1+\dots+\theta_Q(\mu)\bar A_Q$,
	such that
	\begin{equation}\label{eq:requiredbara} \lambda_{\mathrm{min}}(\bar A(\tilde \mu)) = \llb(\tilde \mu;C_J) 
	 \qquad \text{and}  \qquad 
	 \lambda_{\mathrm{min}}(\bar A(\mu_i)) = \lambda_{\mathrm{min}}(A(\mu_i)) 
	\end{equation}
	hold for $i = 1,\ldots, J$.

\end{theorem}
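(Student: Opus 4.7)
The plan is to exploit the freedom in constructing $\bar A_1,\dots,\bar A_Q$ so that their joint numerical range contains both a minimizer of the lower-bound LP at $\tilde\mu$ and the tuples $R(v_i)$ at the already-sampled parameters, while keeping the discrete set of smallest eigenvalues controlled. The cleanest realisation is to take all $\bar A_q$ as diagonal matrices: they are then automatically Hermitian and pairwise commuting, and for any unit vector $u \in \C^\calN$ one has $u^* \bar A_q u = \sum_k |u_k|^2 (\bar A_q)_{kk}$. Consequently the joint numerical range of $(\bar A_1,\dots,\bar A_Q)$ equals the convex hull of the $\calN$ spectral tuples $d^{(k)} := ((\bar A_1)_{kk},\dots,(\bar A_Q)_{kk})$, and for every $\mu \in D$ the smallest eigenvalue collapses to the discrete minimum
\[
 \lmin(\bar A(\mu)) = \min_{1 \le k \le \calN} \theta(\mu)^T d^{(k)}.
\]

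Now pick $y^\star \in \ycallb(C_J)$ with $\theta(\tilde\mu)^T y^\star = \llb(\tilde\mu;C_J)$, which exists because $\ycallb(C_J)$ is a nonempty compact polyhedron (it is contained in the compact box $\calB$ and contains $\calY$). Assign
\[
 d^{(1)} := y^\star, \qquad d^{(i+1)} := R(v_i) \ \text{for}\ i=1,\dots,J,
\]
and, since $J < \calN$ leaves room, fill any remaining diagonal entries with copies of $d^{(1)}$. This uniquely defines the diagonal matrices $\bar A_1,\dots,\bar A_Q$.

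Verification of~\eqref{eq:requiredbara} is then direct. At $\tilde\mu$: $\theta(\tilde\mu)^T d^{(1)} = \llb(\tilde\mu;C_J)$ by construction, and for $k \ge 2$ we have $d^{(k)} = R(v_{k-1}) \in \calY \subseteq \ycallb(C_J)$, hence $\theta(\tilde\mu)^T d^{(k)} \ge \llb(\tilde\mu;C_J)$; the discrete minimum therefore equals $\llb(\tilde\mu;C_J)$. At $\mu_i$: the choice $d^{(i+1)} = R(v_i)$ gives $\theta(\mu_i)^T d^{(i+1)} = v_i^* A(\mu_i) v_i = \lambda_i$, while every other $d^{(k)}$ is either $y^\star$ or some $R(v_j) \in \calY$, and in each case lies in $\ycallb(C_J)$ and therefore satisfies the defining inequality $\theta(\mu_i)^T d^{(k)} \ge \lambda_i$. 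Hence $\lmin(\bar A(\mu_i)) = \lambda_i$.

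There is no major obstacle: the proof collapses once one realises that taking the $\bar A_q$ mutually commuting (diagonal in a common basis) reduces the joint numerical range to a convex hull of $\calN$ freely placeable vertices, after which one simply plants the target data on the diagonals. The only minor technical point is to keep track of the fact that each $R(v_j)$ automatically belongs to $\ycallb(C_J)$, which was already established via the Rayleigh-quotient/minimax argument in Section~\ref{sec:scm_idea} and is the reason the non-active constraints cannot undercut the prescribed values $\lambda_i$ or $\llb(\tilde\mu;C_J)$.
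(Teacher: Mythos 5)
Your construction is correct, and it is genuinely different from the one in the paper. The paper keeps the matrices close to the original ones: it sets $\bar A_q := VV^* A_q VV^* + y_{\tilde\mu,q} V_\perp V_\perp^*$, where $V$ spans the sampled eigenvectors $v_1,\dots,v_J$, so that $\bar A(\mu)$ acts exactly like $A(\mu)$ on $\spn\{v_1,\dots,v_J\}$ and like the scalar $\theta(\mu)^T y_{\tilde\mu}$ on the orthogonal complement; the verification then goes through the splitting $u = u_{\calV} + u_\perp$ and Rayleigh-quotient estimates on each piece. You instead make all $\bar A_q$ simultaneously diagonal, which collapses $\lambda_{\min}(\bar A(\mu))$ to the discrete minimum $\min_k \theta(\mu)^T d^{(k)}$ over the $\calN$ spectral tuples, and then you simply plant $y^\star$ and the points $R(v_i)$ on the diagonal; the two required identities become one-line consequences of the defining constraints of $\ycallb(C_J)$. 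Your argument is the more elementary of the two (no subspace decomposition, no Rayleigh-quotient manipulation), and it uses the hypothesis $J<\calN$ in exactly the same way (one free slot beyond the $J$ planted tuples). What the paper's construction buys in exchange is a stronger indistinguishability statement: since $V^* \bar A_q V = V^* A_q V$, its counterexample matrix agrees with $A(\mu)$ on the entire sampled eigenvector subspace, which supports the discussion following the theorem about what additional information must be incorporated to improve the lower bounds; your diagonal $\bar A(\mu)$ reproduces the SCM data (eigenvalues $\lambda_i$ and the tuples $R(\cdot)$ at the minimizing eigenvectors) but not the full subspace information. Both proofs establish the theorem as stated. One small remark: the sentence about the joint numerical range being a convex hull is true but not needed; all you use is that the smallest eigenvalue of a diagonal matrix is its smallest diagonal entry.
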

\begin{proof}
Let the columns of $V \in \C^{\calN \times J}$
and $V_\perp \in \C^{\calN \times (\calN- J)}$
form orthonormal bases of 
$\mathcal{V}=\spn\{v_{1},\dots,v_{J}\}$ and $\mathcal{V}^\perp$,
respectively, where each
$v_{i}$ denotes an eigenvector associated with $\lambda_{\mathrm{min}}(A(\mu_i))$.
Moreover, let $y_{\tilde\mu} \in \ycallb(C_J) \subset \R^Q$
denote a minimizer of~\eqref{eq:minimization_bound} for $\tilde \mu$, that is,
$\llb(\tilde \mu;C_J) = \theta(\tilde \mu)^T y_{\tilde\mu}$.
The rest of the proof consists of showing that the 
matrices defined by
	\begin{equation*}
		\bar A_q := VV^* A_q VV^* + y_{\tilde\mu,q} V_\perp V_\perp^*, \qquad q \in \{1,\dots, Q\},
	\end{equation*}
satisfy~\eqref{eq:requiredbara}. 

Given $u \in \C^\calN$, we can write $u = u_{\calV} + u_\perp$ with
$u_{\calV} \in \calV$ and 
$u_{\perp} \in \calV^\perp$. For any $\mu \in D$,  we therefore have
\begin{equation} \label{eq:relationblabla}
  u^* \bar A(\mu) u = \sum_{q = 1}^Q \theta_q(\mu) \big( u_\calV^* A_q u_\calV +
 y_{\tilde\mu,q} \|u_{\perp}\|_2^2  
 \big)
 = u_\calV^* A(\mu) u_\calV + \theta(\mu)^T y_{\tilde\mu}\, \|u_{\perp}\|_2^2.
\end{equation}
For $\mu = \mu_i$, this yields
\[
 u^* \bar A(\mu_i) u \ge 
 \lambda_{\min}(A(\mu_i)) \|u_\calV\|_2^2 + \lambda_{\min}(A(\mu_i))
 \|u_{\perp}\|_2^2 = \lambda_{\min}(A(\mu_i)) \|u\|_2^2,
\]
where we used that $y_{\tilde\mu} \in \ycallb(C_J)$ implies 
$\theta(\mu_i)^T y_{\tilde\mu}\ge \lambda_{\min}(A(\mu_i))$.
Since equality is attained for $u = v_i$, this implies the second 
equality in~\eqref{eq:requiredbara}.

To show the first equality, we first notice that 
the definition of $\llb(\tilde \mu;C_J)$ implies 
\[u_\calV^* A(\tilde \mu) u_\calV = \theta(\tilde \mu)^T R(u_\calV) \|u_\calV\|_2^2 \ge \llb(\tilde \mu;C_J) \|u_\calV\|_2^2. \]
Inserted into~\eqref{eq:relationblabla} for $\mu = \tilde \mu$, this 
yields
\[
 u^* \bar A(\tilde \mu) u \ge \llb(\tilde \mu;C_J) \|u_\calV\|_2^2 + \llb(\tilde \mu;C_J) \|u_\perp\|_2^2 = \llb(\tilde \mu;C_J) \|u\|_2^2.
\]
Since equality is attained by any $u \in \calV^\perp$, this 
shows the first equality in~\eqref{eq:requiredbara} and thus
completes the proof.
\end{proof}

Since the definition of the lower bounds in~\eqref{eq:minimization_bound} only depends
on $\theta(\mu)$ and the eigenvalues at $\mu_i$, the lower bounds for the matrix function $\bar A(\mu)$ constructed in Theorem~\ref{thm:lower_bound_limitations} are identical with those for $A(\mu)$. For $\bar A(\mu)$, the lower bound $\llb(\tilde\mu;C_J)$ coincides with the exact eigenvalue at an arbitrary fixed $\tilde \mu \in D$. Hence, additional knowledge, beyond the eigenvalues at $\mu_i$, needs to be incorporated to improve the lower bounds.
%

\section{Subspace approach} \label{sec:subspace}

In this section, our new subspace approach is presented that takes eigenvector information across different parameter samples into account and offers the flexibility to incorporate eigenvectors for larger eigenvalues as well.


Given $C_J=\{\mu_1,\dots,\mu_J\} \subset D$, suppose that for each sample $\mu_i$ we have  computed the $\ell \geq 1$ smallest eigenvalues 
\[\lambda_i = \lambda_i^{(1)} \le  \lambda_i^{(2)} \le \cdots \le \lambda_i^{(\ell)}\]  of $A(\mu_i)$
along with an orthonormal basis of associated eigenvectors 
 $v_i^{(1)}, v_i^{(2)}, \ldots, v_i^{(\ell)} \in \C^\calN$. 
To simplify notation, we assume that an equal number of eigenpairs has been computed
for each $\mu_1,\dots,\mu_J$, although this is not necessary. 
The eigenvectors will be collected in the subspace 
\begin{equation}\label{eq:vectorsubspace}
	\calV(C_J,\ell) := \spn\{v_1^{(1)},\dots,v_1^{(\ell)},v_2^{(1)},\dots,v_2^{(\ell)},\dots,v_J^{(1)},\dots,v_J^{(\ell)}\}.
\end{equation}
In the subsequent two sections, we discuss how the information in
$\calV(C_J,\ell)$ can be used to compute tighter bounds for $\lminmu$.

\subsection{Subspace approach for upper bounds} \label{sec:subspace_ub}

Given the subspace $\calV(C_J,\ell)$ from~\eqref{eq:vectorsubspace},
we define an upper bound set analogously to~\eqref{eq:upper_set}:
\[
	\ycalsub(C_J,\ell) := \{R(v):  v\in \calV(C_J,\ell) \}.
\]
The corresponding upper bound for $\mu \in D$ is defined as
\[
	\lsub(\mu;C_J,\ell) := \min_{y\in \ycalv(C_J,\ell)} \theta(\mu)^Ty.
\]
Clearly, we have $\ycalub(C_J) \subseteq \ycalsub(C_J,\ell) \subseteq \ycal$
and thus \[\lub(\mu;C_J) \ge \lsub(\mu;C_J,\ell) \ge \lminmu.\]

To evaluate $\lsub(\mu;C_J,\ell)$, we first compute an orthonormal basis 
$V \in \C^{\calN\times J\ell}$ of $\calV(C_J,\ell)$
and obtain
\begin{eqnarray}
	\lsub(\mu;C_J,\ell) &=& \min_{v \in \calV(C_J,\ell) } \theta(\mu)^TR(v) 
	= \min_{w\in \C^{J\ell} \atop \|w\|_2 = 1} \theta(\mu)^TR(Vw) \nonumber \\
	&=& \min_{w\in \C^{J\ell} \atop \|w\|_2 = 1} \theta_1(\mu) w^*V^*A_1Vw + \cdots + \theta_Q(\mu) w^*V^*A_QVw \nonumber \\
	&=& \lambda_{\mathrm{min}}\big(\theta_1(\mu)V^*A_1V + \dots \theta_Q(\mu)V^*A_QV\big)=\lambda_{\mathrm{min}}(V^*A(\mu)V). \label{eq:smallevp}
\end{eqnarray}
Thus, the computation of $\lsub(\mu,C_J,\ell)$ requires the solution 
of an eigenvalue problem of size $J\ell \times J\ell$, with 
$J\ell$ usually much smaller than $\calN$.

\subsection{Subspace approach for lower bounds} \label{sec:subspace_lb}

We will use a perturbation result to turn the upper bound~\eqref{eq:smallevp}
into a lower bound $\lslb(\mu;C_J,\ell)$ for $\mu \in D$. For this purpose, we
consider for some small integer $r\le J\ell$ the $r$ smallest eigenvalues
\[
\lsub(\mu;C_J,\ell) = \lambda^{(1)}_{\calV} \le \lambda^{(2)}_{\calV} \le \cdots \le \lambda^{(r)}_{\calV}
\]
of $V^*A(\mu)V$, along with the corresponding eigenvectors $w_1,\ldots, w_r \in \C^{J\ell}$. Let $U \in \C^{\calN\times r}$ be an orthonormal basis of the subspace
$\calU(\mu)$ spanned by the Ritz vectors:
\[\calU(\mu) := \spn\{ V w_1,\ldots, V w_r\}.\] Moreover,
let $U_\perp \in \C^{\calN\times (\calN-r)}$ be an orthonormal basis of
$\calU^\perp(\mu)$ and denote the eigenvalues of $U_\perp^*A(\mu) U_\perp$ by
\[
\lambda^{(1)}_{\calU^\perp} \le \lambda^{(2)}_{\calU^\perp} \le \cdots \le \lambda^{(\calN-r)}_{\calU^\perp}.
\]
The transformed matrix
\[
[U,U_\perp]^* A(\mu) [U,U_\perp] = 
\begin{bmatrix}U^*A(\mu) U & U^*A(\mu) U_\perp \\ U_\perp^*A(\mu) U & U_\perp^*A(\mu) U_\perp \end{bmatrix}
\]
clearly has the same eigenvalues as $A(\mu)$, while the perturbed matrix
\[
 \begin{bmatrix}U^*A(\mu) U & 0 \\ 0 & U_\perp^*A(\mu) U_\perp \end{bmatrix}
\]
has the eigenvalues $\big\{\lambda^{(1)}_{\calV},\ldots,\lambda^{(r)}_{\calV}\big\} \cup \big\{ \lambda^{(1)}_{\calU^\perp},\ldots, \lambda^{(\calN-r)}_{\calU^\perp} \big\}$.
Applying a perturbation result by Li and Li~\cite{Li2005} to this situation yields
the error bound
\[
 \big|\lminmu - \min\big(\lambda^{(1)}_{\calV},\lambda^{(1)}_{\calU^\perp} \big)\big| \leq \frac{2\rho^2}{\delta + \sqrt{\delta^2 + 4\rho^2}},
\]
with the residual norm \[
\rho := \| U_\perp^*A(\mu) U \|_2
= \|A(\mu) U - U (U^* A(\mu) U)\|_2
\] and the absolute gap
$\delta := |\lambda^{(1)}_{\calV} - \lambda^{(1)}_{\calU^\perp}|$.
Rearranging terms thus gives the lower bound
\begin{equation} \label{eq:theoreticallowerbound}
 f(\lambda^{(1)}_{\calU^\perp}) \le \lminmu, \quad \text{with} \quad f(\eta):= \min\big(\lambda^{(1)}_{\calV},\eta \big) - \frac{2\rho^2}{|\lambda^{(1)}_{\calV} - \eta| + \sqrt{|\lambda^{(1)}_{\calV} - \eta|^2 + 4\rho^2}}.
\end{equation}
This lower bound is not practical so far, as it involves the quantity $\lambda^{(1)}_{\calU^\perp}$, which would require the solution of a large eigenvalue problem of size $(\calN - r)\times (\calN - r)$.
\begin{lemma}\label{lemma:function_analysis}
	The function $f:\R\rightarrow\R$ defined in~\eqref{eq:theoreticallowerbound}
	is continuous and monotonically increasing.
\end{lemma}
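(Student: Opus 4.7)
The plan is to reduce the case analysis in $f$ to a single closed-form expression by rationalizing the fraction in the definition, from which continuity and monotonicity will both become transparent.

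First I would introduce the shorthand $a := \lambda^{(1)}_{\calV}$ (a constant with respect to $\eta$) and $t := |a - \eta|$. Rationalizing the denominator by multiplying with the conjugate $\sqrt{t^2 + 4\rho^2} - t$ gives
\[
\frac{2\rho^2}{t + \sqrt{t^2+4\rho^2}} \;=\; \frac{2\rho^2\bigl(\sqrt{t^2+4\rho^2}-t\bigr)}{4\rho^2} \;=\; \frac{\sqrt{t^2+4\rho^2}-t}{2}.
\]
(The case $\rho=0$ is handled separately but is trivial, since then $f(\eta)=\min(a,\eta)$.) Substituting this back, I would split into the two cases $\eta\le a$ and $\eta>a$ and in each case simplify $\min(a,\eta) \pm \frac{t}{2}$ to observe that both cases collapse to the single formula
\[
f(\eta) \;=\; \frac{a+\eta - \sqrt{(\eta-a)^2 + 4\rho^2}}{2}.
\]

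Continuity is then immediate, since $f$ is the composition of continuous functions on all of $\R$. For monotonicity, I would differentiate:
\[
f'(\eta) \;=\; \frac{1}{2}\left(1 - \frac{\eta-a}{\sqrt{(\eta-a)^2 + 4\rho^2}}\right).
\]
Since $|\eta-a| \le \sqrt{(\eta-a)^2 + 4\rho^2}$, the ratio inside the parentheses lies in $[-1,1]$, so $f'(\eta) \ge 0$ everywhere, proving that $f$ is monotonically increasing.

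I do not foresee a real obstacle here; the only thing to watch is verifying that the unified expression really does agree with the piecewise definition across $\eta = a$, which is a direct check using $|a - \eta| = \pm (a-\eta)$ in the two cases. Once the closed form is in hand, everything else is one differentiation.
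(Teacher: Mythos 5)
Your proof is correct, and it takes a genuinely cleaner route than the paper's. The paper splits into the two cases $\eta \ge \lambda^{(1)}_{\calV}$ and $\eta \le \lambda^{(1)}_{\calV}$: in the first case monotonicity is read off from the fact that the denominator grows with $\eta$, while in the second case it computes $f'(\eta)$ and verifies nonnegativity through a chain of inequalities involving the square root. Your rationalization step, which turns the subtracted fraction into $\tfrac{1}{2}\bigl(\sqrt{t^2+4\rho^2}-t\bigr)$ and then uses $\min(a,\eta)+\tfrac{1}{2}|a-\eta|=\tfrac{1}{2}(a+\eta)$ to collapse both branches into the single expression $f(\eta)=\tfrac{1}{2}\bigl(a+\eta-\sqrt{(\eta-a)^2+4\rho^2}\bigr)$, eliminates the case analysis entirely; continuity is then immediate and monotonicity follows from one derivative whose nonnegativity is obvious. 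What your approach buys is a closed form that makes both claims transparent at once (and even exhibits $f$ as concave, with $f'\le 1$, which could be of independent use); what the paper's approach buys is that it works directly from the formula as stated without any preliminary algebraic identity. The only point to be careful about, which you already flag, is the degenerate case $\rho=0$, where the original expression is $0/0$ at $\eta=a$; your unified formula resolves this by continuity and remains valid there.
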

\begin{proof}
See Section~\ref{sec:proofs}.
\end{proof}
Lemma~\ref{lemma:function_analysis} implies that $f(\eta)$
remains a lower bound as long as $\eta \le \lambda^{(1)}_{\calU^\perp}$. To summarize, our subspace-accelerated lower bound is defined as 
\begin{equation}\label{eq:subspace_lower_bound}
\lslb(\mu;C_J,\ell):=\min\big(\lambda^{(1)}_{\calV},\eta \big) - \frac{2\rho^2}{|\lambda^{(1)}_{\calV} - \eta| + \sqrt{|\lambda^{(1)}_{\calV} - \eta|^2 + 4\rho^2}}
\end{equation}
for a lower bound $\eta$ of $\lambda^{(1)}_{\calU^\perp}$.

\subsubsection{Determining a lower bound for $\lambda^{(1)}_{\calU^\perp}$}\label{sec:ritzbound}

The lower bound for $\lambda^{(1)}_{\calU^\perp} = \lambda_{\min}(U_\perp^* A(\mu)U_\perp)$
needed in~\eqref{eq:subspace_lower_bound} will be determined by adapting the ideas from Section~\ref{sec:scm_idea}.
Let us recall that SCM determines a lower bound for $\lambda_{\min}(A(\mu))$ by solving the LP 
\begin{equation}\label{eq:SCM_lb}
	\llb(\mu;C_j) = \min_{y\in \ycallb(C_J)} \theta(\mu)^Ty,
\end{equation}
with $\ycallb(C_J):= \{ y\in \calB : \theta(\mu_i)^Ty \geq \lambda_i, i=1,\dots,J \}$
and the bounding box $\calB$ defined in~\eqref{eq:bounding_box}. To simplify the discussion, we always assume in the following that $\ycallb(C_J)$ is
a simple polytope with no degenerate facets. Then there exists an optimizer $y_\mu \in \R^Q$ of~\eqref{eq:SCM_lb} such that there are $Q$, among $2Q+J$, linearly independent active 
constraints~\cite{Matousek2007}. In other words,
$y_\mu$ satisfies a linear system
\begin{equation} \label{eq:linearsystemactive}
 	\Theta y_{\mu} = \psi,
\end{equation}
where $\Theta \in \R^{Q\times Q}$ is invertible and each equation corresponds either to a constraint of the form $\theta(\mu_i)^Ty_\mu = \lambda_i$ or to a box
constraint. In the following, we tacitly assume that at least one of the active constraints is a non-box constraint. 

Establishing a lower bound for $\lambda^{(1)}_{\calU^\perp}$ is equivalent to
determining $\eta$ such that $\eta \le u_\perp^* A(\mu) u_\perp$
holds for every $u_\perp \in \calU^\perp(\mu)$ with $\|u_\perp\|_2 = 1$. The restriction of $u_\perp$ to a lower-dimensional subspace
can be used to tighten the non-box constraints in~\eqref{eq:SCM_lb}. 

\begin{lemma} \label{lemma:lowerbound}
 With the notation introduced above, let $\Lambda_i = \text{\rm diag}\big(\lambda_i^{(1)},\ldots,\lambda_i^{(\ell)}\big)$ and
 $V_i = \big[v_i^{(1)}, \ldots, v_i^{(\ell)}\big]$. If $\calN - r \geq r$ then
 \[
  u_\perp^*A(\mu_i)u_\perp \ge \lambda_i + \beta_i,
 \]
 where $\beta_i$ is the smallest eigenvalue of the matrix
 \[
  (\Lambda_i - \lambda_i  I_\ell ) - V_i^* U U^* V_i \big(\Lambda_i - \lambda_i^{(\ell + 1)} I_\ell  \big).
 \]
\end{lemma}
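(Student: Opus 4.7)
The plan is to decompose $u_\perp$ along the eigenbasis of $A(\mu_i)$, convert the quadratic form into a bound of the shape $g - a^* D_2 a$ with $a = V_i^* u_\perp$, then turn the orthogonality $u_\perp \perp \calU(\mu)$ into a sharp quadratic constraint on $a$, and finally recognize the resulting Rayleigh quotient as precisely $\lambda_{\min}(M_i)$.

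First, complete $V_i$ to a unitary matrix $[V_i, V_i^\perp]$ that diagonalizes $A(\mu_i)$, so that $(V_i^\perp)^* A(\mu_i) V_i^\perp \succeq \lambda_i^{(\ell+1)} I_{N-\ell}$, and write $u_\perp = V_i a + V_i^\perp b$ with $\|a\|^2 + \|b\|^2 = 1$. The lower bound $b^* (V_i^\perp)^* A(\mu_i) V_i^\perp b \geq \lambda_i^{(\ell+1)} \|b\|^2$ together with $\|b\|^2 = 1 - \|a\|^2$ yields, after subtracting $\lambda_i$,
\[
u_\perp^* A(\mu_i) u_\perp - \lambda_i \;\geq\; g - a^* D_2\, a,
\]
where $g := \lambda_i^{(\ell+1)} - \lambda_i$, $D_1 := \Lambda_i - \lambda_i I_\ell$ and $D_2 := \lambda_i^{(\ell+1)} I_\ell - \Lambda_i$, so that $D_1 + D_2 = g I_\ell$ and both $D_1, D_2 \succeq 0$.

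The technical core is turning $U^* u_\perp = 0$ into a constraint on $a$ that is sharp enough to produce $M_i$. Since $UU^* u_\perp = 0$, one has $a = V_i^* u_\perp = W^* u_\perp$ with $W := (I_N - UU^*) V_i$, and a direct calculation gives $W^* W = I_\ell - P$, where $P := V_i^* UU^* V_i$. Under the dimensional hypothesis of the lemma, $W$ has full column rank, so $I_\ell - P$ is invertible and $W(W^*W)^{-1} W^*$ is the orthogonal projector onto $\mathrm{range}(W)$; this gives
\[
a^*(I_\ell - P)^{-1} a \;=\; u_\perp^* W(W^*W)^{-1} W^* u_\perp \;\leq\; \|u_\perp\|^2 = 1.
\]
Degenerate cases (where $I_\ell - P$ is singular) can be handled by a standard continuity/pseudo-inverse argument.

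Maximizing $a^* D_2 a$ subject to $a^*(I_\ell - P)^{-1} a \leq 1$ via the substitution $z = (I_\ell - P)^{-1/2} a$ gives $\lambda_{\max}\bigl((I_\ell - P)^{1/2} D_2 (I_\ell - P)^{1/2}\bigr) = \lambda_{\max}\bigl((I_\ell - P) D_2\bigr)$ by similarity, so that
\[
u_\perp^* A(\mu_i) u_\perp - \lambda_i \;\geq\; g - \lambda_{\max}\bigl((I_\ell - P) D_2\bigr).
\]
Using $D_1 = g I_\ell - D_2$, the lemma's matrix rewrites as $M_i = D_1 + PD_2 = g I_\ell - (I_\ell - P) D_2$, whose eigenvalues are $g - \nu$ for $\nu \in \mathrm{spec}((I_\ell - P) D_2)$. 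Since $(I_\ell - P) D_2$ is similar via $D_2^{1/2}$ (extending to singular $D_2$ by continuity) to the Hermitian matrix $D_2^{1/2}(I_\ell - P) D_2^{1/2}$, this spectrum is real and $\lambda_{\min}(M_i) = g - \lambda_{\max}((I_\ell - P) D_2) = \beta_i$. I expect the main obstacle to be locating the sharp constraint $a^*(I_\ell - P)^{-1} a \leq 1$: a naive Cauchy-Schwarz gives only $a^*(I_\ell + P) a \leq 1$, which leads to a strictly weaker lower bound that does not match $\lambda_{\min}(M_i)$. A minor further point is justifying that the (non-Hermitian) $M_i$ has real spectrum, which the similarity above supplies.
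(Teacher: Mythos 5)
Your proof is correct and reaches the same target quantity as the paper's, but the mechanism for the key step differs. Both arguments begin identically: split $u_\perp$ along the invariant subspace $\spn(V_i)$ and its complement, replace the tail of the spectrum by $\lambda_i^{(\ell+1)}$, and reduce the claim to bounding $a^* D_2 a$ with $a = V_i^* u_\perp$ and $D_2 = \lambda_i^{(\ell+1)} I_\ell - \Lambda_i$. The paper then writes the extremal value directly as $\lambda_{\min}\big(U_\perp^* V_i(\Lambda_i - \lambda_i^{(\ell+1)} I_\ell) V_i^* U_\perp\big)$ and invokes the invariance of the nonzero (here: negative) eigenvalues of a product under cyclic permutation of its factors, which collapses the $(\calN-r)\times(\calN-r)$ problem to the $\ell\times\ell$ matrix $(I_\ell - P)D_2$ in one line. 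You instead encode the constraint $u_\perp \in \calU^\perp$ as the quadratic inequality $a^*(I_\ell-P)^{-1}a \le 1$ via the orthogonal projector onto $\mathrm{range}\big((I-UU^*)V_i\big)$ and then optimize by congruence. The two routes are equivalent --- your constrained maximum $\lambda_{\max}\big((I_\ell-P)^{1/2}D_2(I_\ell-P)^{1/2}\big)$ equals the paper's $\lambda_{\max}\big(D_2^{1/2}(I_\ell-P)D_2^{1/2}\big)$, both being $\lambda_{\max}\big((I_\ell-P)D_2\big)$ --- but yours is more self-contained and supplies the reality of the spectrum of the non-Hermitian matrix $M_i$ for free, at the cost of a rank discussion that the paper's cyclic-permutation argument sidesteps.

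One inaccuracy to repair: the hypothesis $\calN - r \ge r$ does \emph{not} imply that $W = (I - UU^*)V_i$ has full column rank. The matrix $I_\ell - P$ is singular exactly when $\calU(\mu) \cap \spn(V_i) \ne \{0\}$, and this is not a fringe case: at $\mu = \mu_i$ the Ritz space $\calU(\mu_i)$ contains $v_i^{(1)}$, so $I_\ell - P$ is singular there and nearly singular for $\mu$ close to $\mu_i$, which is precisely the regime in which the lemma is later applied (Lemma~\ref{lem:region}). The pseudo-inverse version you relegate to a parenthetical should therefore be the main argument: $W(W^*W)^+W^*$ is still the orthogonal projector onto $\mathrm{range}(W)$, the vector $a = W^*u_\perp$ lies in $\mathrm{range}(I_\ell - P)$, and writing $a = (I_\ell-P)^{1/2}z$ with $\|z\|_2\le 1$ yields the same bound $a^*D_2a \le \lambda_{\max}\big((I_\ell-P)D_2\big)$. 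With that adjustment the proof is complete.
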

\begin{proof}
Using the spectral decomposition of $A(\mu_i)$, the result follows from
\begin{eqnarray}
\min_{u_\perp \in \calU^\perp \atop \|u_\perp\|_2 = 1} u_\perp^*A(\mu_i)u_\perp & \ge & 
\min_{u_\perp \in \calU^\perp \atop \|u_\perp\|_2 = 1} u_\perp^* V_i \Lambda_i V_i^*u_\perp + \lambda_i^{(\ell + 1)}
u_\perp^* (I - V_i V_i^*) u_\perp  \label{eq:firstinequality} \\
&= &\lambda_i^{(\ell + 1)} +
\min_{u_\perp \in \calU^\perp \atop \|u_\perp\|_2 = 1}  u_\perp^* V_i \big(\Lambda_i - \lambda_i^{(\ell + 1)} I_\ell\big) V_i^*u_\perp \nonumber \\
&=& \lambda_i^{(\ell + 1)} + \lambda_{\min}\big(
U_\perp^* V_i \big(\Lambda_i - \lambda_i^{(\ell + 1)} I_\ell\big) V_i^* U_\perp \big) \nonumber \\
&=& \lambda_i^{(\ell + 1)} + \lambda_{\min}\big(
V_i^* U_\perp U_\perp^* V_i \big(\Lambda_i - \lambda_i^{(\ell + 1)} I_\ell\big)  \big) \nonumber \\
&=& \lambda_i^{(\ell + 1)} + \lambda_{\min}\big( (I_\ell - 
V_i^* U U^* V_i) \big(\Lambda_i - \lambda_i^{(\ell + 1)} I_\ell\big)  \big) \nonumber \\
&=& \lambda_i + \lambda_{\min}\big( (\Lambda_i - \lambda_i  I_\ell ) - V_i^* U U^* V_i \big(\Lambda_i - \lambda_i^{(\ell + 1)} I_\ell  \big)  \big), \nonumber 
\end{eqnarray}
where we used in the third equality that the negative eigenvalues of the matrix product
$U_\perp^* V_i \big(\Lambda_i - \lambda_i^{(\ell + 1)} I_\ell\big) V_i^* U_\perp$
do not change under a cyclic permutation of its factors.
\end{proof}

Using the values of $\beta_i$ defined in Lemma~\ref{lemma:lowerbound}, we update the right-hand side $\psi \in \R^Q$
in~\eqref{eq:linearsystemactive} as follows: If the $k$th equation corresponds to a non-box constraint
$\theta(\mu_i)^Ty = \lambda_i$, we set $\tilde \psi_k := \psi_k + \beta_i = \lambda_i + \beta_i$
and, otherwise, $\tilde \psi_k := \psi_k$.
Since $\Theta$ is invertible, the solution of the resulting LP
\[
	\inf_y \theta(\mu)^Ty \quad \text{subject to} \quad \Theta y \geq \tilde \psi
\]
is trivially given by
\begin{equation} \label{eq:checky}
	\widecheck y_\mu :=\Theta^{-1} \tilde \psi.
\end{equation}
This finally yields the desired lower bound
\[
 \eta(\mu) := \theta(\mu)^T \widecheck y_\mu \leq \lambda^{(1)}_{\calU^\perp} = \lambda_{\min}(U_\perp^* A(\mu) U_\perp).
\]

\begin{remark} \label{remark:choiceofr}
The choice of $r$, the dimension of the Ritz subspace $\calU(\mu)$, requires some consideration.
For $r = 0$, $\calU_\perp(\mu) = \R^\calN$ yields no improvement: $\lslb(\mu;C_J,\ell) = \llb(\mu;C_J,\ell)$.
Intuitively, choosing $r = 1$ will be most effective when the second smallest eigenvalue of $A(\mu)$ is well separated from the smallest eigenvalue. Otherwise, one may benefit from choosing slightly larger values of $r$. In practice, we choose $r$ by taking the maximal value of
$\lslb(\mu;C_J,\ell)$ for a few small values of $r=0,1,2,\dots$.
\end{remark}

\subsection{Algorithm and computational complexity } \label{sec:subspace_complexity}

The implementation of the proposed lower and upper bounds requires some care in order to avoid
unnecessary  that involve quantities of size $\calN$, the original size of the problem.
\begin{description}
\item[Computation of $\rho$.] The quantity
$\rho = \|A(\mu) U - U \Lambda_U\|_2$
with $\Lambda_U = U^* A(\mu) U = \text{diag}\big(\lambda_{\calV}^{(1)},\ldots,\lambda_{\calV}^{(r)}\big)$
can be computed by solving an $r\times r$ eigenvalue problem:
	\begin{eqnarray}
		\rho^2 &=& \lambda_{\max}((A(\mu)U(\mu) - U(\mu)\Lambda(\mu))^*(A(\mu)U(\mu) - U(\mu)\Lambda(\mu))) \nonumber\\
			&=& \lambda_{\max}(U(\mu)^*A(\mu)^*A(\mu)U(\mu) - \Lambda(\mu)^2).
			\nonumber 
	\end{eqnarray}

\item[Computation of $V^* A(\mu) V$ and $U^*A(\mu)^*A(\mu) U$.] By the affine linear decomposition~\eqref{eq:affinelinear},
\[
 V^* A(\mu) V = \theta_1(\mu) V^* A_1 V + \cdots + \theta_Q(\mu) V^* A_Q V.
\]
A standard technique in RBM, we compute and store the $J\ell \times J\ell$ matrices $V^* A_q V$,
and update them as new columns are added to $V$. In turn, the computation of 
$V^* A(\mu) V$, which is needed to evaluate the upper bound for every $\mu \in \Xi$,
becomes negligible as long as $J\ell \ll \calN$.
Similarly, the evaluation of $U^* A(\mu)^*A(\mu) U$ needed for $\rho$
becomes negligible after the  precomputation of 
$V^*A_q^*A_{q^\prime} V$ for all $q,q^\prime=1,\dots,Q$.\\

\item[Choice of $\ell$.] Clearly, a larger choice of $\ell$ can be expected to lead to better bounds. 
On the other hand, a larger value of $\ell$ increases the computational cost. Intuitively, choosing $\ell$ larger than one appears to be most beneficial when
the gap between the smallest and second smallest eigenvalues is small or even vanishes. One could, for example, choose $\ell$ such that
$\lambda_i^{(\ell+1)} - \lambda_i^{(1)}$ exceeds a certain threshold. However, in the absence of a priori information on eigenvalue gaps,
it might be wisest to simply choose $\ell = 1$ for all $\mu_i$.
\end{description}

\begin{algorithm}
\floatname{algorithm}{Algorithm}
\caption{Subspace SCM}
\begin{algorithmic}[1]
\REQUIRE Training set $\Xi$, affine linear decomposition such that  $A(\mu)=\theta_1(\mu)A_1+\dots+\theta_Q(\mu)A_Q$ is
Hermitian for every $\mu \in \Xi$. Relative error tolerance $\varepsilon_{\mathrm{SCM}}$.
\ENSURE Set $C_J \subset \Xi$ with corresponding eigenvalues $\lambda_i^{(j)}$ and eigenvector basis $V$ of
$\calV(C_J,\ell)$, such that $\frac{\lsub(\mu;C_J,\ell) - \lslb(\mu;C_J,\ell)}{\lsub(\mu;C_J,\ell)} < \varepsilon_{\mathrm{SCM}}$ for every $\mu \in \Xi$.
\STATE compute $\lambda_{\min}(A_q), \lambda_{\max}(A_q)$ for $q = 1,\ldots,Q$, defining $\calB$ according to~\eqref{eq:bounding_box}
\STATE $J=0$, $C_0=\emptyset$
\WHILE{ $\max\limits_{\mu \in \Xi} \frac{\lsub(\mu;C_J,\ell) - \lslb(\mu;C_J,\ell)}{\lsub(\mu;C_J,\ell)} > \varepsilon_{\mathrm{SCM}}$  }
  \STATE $\mu_{J+1} \leftarrow \argmax\limits_{\mu \in \Xi} \frac{\lsub(\mu;C_J,\ell) - \lslb(\mu;C_J,\ell)}{\lsub(\mu;C_J,\ell)} $
  \STATE compute smallest eigenpairs $(\lambda_{J+1}^{(1)},v_{J+1}^{(1)}),\dots,(\lambda_{J+1}^{(\ell)},v_{J+1}^{(\ell)})$ of $A(\mu_{J+1})$
  \STATE $C_{J+1} \leftarrow C_J \cup \mu_{J+1}$
  \STATE update $V^*A_qV$ and $V^*A_q^*A_{q^\prime} V$ for all $q,q^\prime =1,\dots,Q$
  \FOR{$\mu \in \Xi$}
	\STATE compute $\lsub(\mu;C_{J+1},\ell) = \lambda_{\mathrm{min}}(V^*A(\mu)V)$
	\STATE compute $\rho = \sqrt{\lambda_{\max}(U(\mu)^*A(\mu)^*A(\mu)U(\mu) - \Lambda(\mu)^2)}$
	\STATE compute $y_\mu = \argmin_{y \in \ycallb(C_{J+1})} \theta(\mu)^Ty$ and updated $\widecheck y_\mu$ according to~\eqref{eq:checky}
	\STATE $\eta(\mu) \leftarrow \theta(\mu)^T \widecheck y_\mu$
	\STATE compute $\lslb(\mu;C_{J+1},\ell)$ according to~\eqref{eq:subspace_lower_bound}
  \ENDFOR
  \STATE $J \leftarrow J+1$
\ENDWHILE
\end{algorithmic}
\label{alg:subspace}
\end{algorithm}

Algorithm~\ref{alg:subspace} summarizes our proposed procedure for computing subspace lower and upper bounds. Similarly as SCM, the algorithm requires 
the solution of $2Q + J$ eigenvalue problems of size
$\calN\times\calN$ for determining the bounding box in the beginning and 
the smallest $\ell+1$ eigenpairs in each iteration. Clearly, the latter
part will become more expensive if $\ell \ge 1$. However, we expect that this increase 
can be mitigated significantly in practice by the use of block algorithms.
More specifically, when using a block eigenvalue solver such as LOBPCG~\cite{Knyazev2001}
and efficient implementations of block matrix-vector products with the matrix $A$ (and its
preconditioner), the computation of $\ell$ smallest eigenvalues will not
be much more expensive as long as $\ell$ remains modest.

Computing $\lsub(\mu;C_J,\ell)$ and $\lslb(\mu;C_J,\ell)$ for all $\mu \in \Xi$ amounts to solving $J|\Xi|$ LP problems with $Q$ variables and $2Q+J$ constraints, as well as $J|\Xi|$ eigenproblems of size $J\ell \times J\ell$. As long as $J\ell \ll \calN$, 
these parts will be negligible, and the cost of Algorithms~\ref{alg:SCM} and~\ref{alg:subspace} will be approximately equal.

\subsection{Interpolation results} \label{sec:subspace_interpolation}

By definition, we already know that the bounds from our subspace approach are never worse than the bounds produced by SCM:
\begin{equation} \label{eq:cascade} 
		\lambda_{\textrm{LB}}(\mu;C_J) \leq\lambda_{\textrm{SLB}}(\mu;C_J,\ell)\leq \lambda_{\min}(A(\mu)) \leq\lambda_{\textrm{SUB}}(\mu;C_J,\ell) \leq \lambda_{\textrm{UB}}(\mu;C_J),
\end{equation}
with equality at $\mu = \mu_i \in C_J$. Together with Theorem~\ref{thm:upper_hermite}, these inequalities imply that our upper bounds also interpolate the derivatives at 
$\mu_i$.

	\begin{corollary}\label{cor:subspace_upper_bound}
	For any $\ell \ge 1$ and any $\mu_i \in C_J$ that satisfies
	the assumptions of Theorem~\ref{thm:upper_hermite},
	it holds that 
		\[
			\nabla \lsub(\mu_i;C_J,\ell) = \nabla \lmin (A(\mu_i))
		\]
with $\lsub(\mu_i;C_J,\ell)$ defined as in~\eqref{eq:smallevp}.
\end{corollary}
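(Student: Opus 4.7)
The plan is to reduce the claim to a direct application of the standard first-order perturbation formula for a simple eigenvalue, applied to the Rayleigh--Ritz projection $V^*A(\mu)V$ from~\eqref{eq:smallevp}, where $V\in\C^{\calN\times J\ell}$ is an orthonormal basis of $\calV(C_J,\ell)$. In particular, the squeeze in~\eqref{eq:cascade} will not be needed directly; the key insight is that the Ritz value at $\mu_i$ is attained with an exact eigenvector of $A(\mu_i)$.

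First, I would exhibit a distinguished eigenvector of $V^*A(\mu_i)V$ at the sample point. Since $v_i\in\calV(C_J,\ell)$, there is a unit vector $w_i\in\C^{J\ell}$ with $Vw_i=v_i$, and then
\[
w_i^* V^*A(\mu_i)Vw_i = v_i^*A(\mu_i)v_i = \lambda_i = \lsub(\mu_i;C_J,\ell) = \lambda_{\min}(V^*A(\mu_i)V),
\]
where the penultimate equality is the equality case of~\eqref{eq:cascade} at $\mu=\mu_i$. Hence $(\lambda_i,w_i)$ is a smallest eigenpair of $V^*A(\mu_i)V$.

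Second, and this is the only delicate point, I would verify that $\lambda_i$ is a \emph{simple} eigenvalue of the projected matrix $V^*A(\mu_i)V$. Suppose $w\in\C^{J\ell}$ is a unit vector orthogonal to $w_i$ with $V^*A(\mu_i)Vw=\lambda_iw$. Then $u:=Vw$ is a unit vector in $\C^\calN$ satisfying $u^*A(\mu_i)u=\lambda_i=\lambda_{\min}(A(\mu_i))$, so by the Courant--Fischer characterization combined with the assumed simplicity of $\lambda_i$ for $A(\mu_i)$, the vector $u$ must lie in $\spn\{v_i\}$. Since $V$ has orthonormal columns and $Vw_i=v_i$, this forces $w$ to be a scalar multiple of $w_i$, contradicting $w\perp w_i$.

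Third, with simplicity established, the eigenvalue derivative formula applied to the differentiable matrix function $\mu\mapsto V^*A(\mu)V$ gives, for each $p=1,\ldots,P$,
\[
\frac{\partial \lsub}{\partial \mu^{(p)}}(\mu_i;C_J,\ell)
= w_i^* V^*\frac{\partial A}{\partial \mu^{(p)}}(\mu_i)Vw_i
= v_i^*\frac{\partial A}{\partial \mu^{(p)}}(\mu_i)v_i
= \frac{\partial \lambda_{\min}}{\partial \mu^{(p)}}(A(\mu_i)),
\]
where the last step reuses the computation from the proof of Theorem~\ref{thm:upper_hermite}. Assembling these partial derivatives yields the claimed gradient identity. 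The main obstacle is the simplicity check in the second step; everything else is an immediate consequence of the Rayleigh--Ritz representation in~\eqref{eq:smallevp} and the standard simple-eigenvalue perturbation formula.
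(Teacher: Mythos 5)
Your proof is correct, but it takes a genuinely different route from the paper. The paper treats the statement as a true corollary of Theorem~\ref{thm:upper_hermite}: since $\lmin(A(\mu)) \le \lsub(\mu;C_J,\ell) \le \lub(\mu;C_J)$ holds near $\mu_i$ with equality of values at $\mu_i$ and with $\nabla\lmin(A(\mu_i)) = \nabla\lub(\mu_i;C_J)$, a sandwich argument forces the middle function to be differentiable at $\mu_i$ with the same gradient. You instead work directly on the projected matrix $V^*A(\mu)V$: you identify $w_i$ with $Vw_i=v_i$ as a minimizing Ritz vector, prove that $\lambda_i$ is a \emph{simple} eigenvalue of $V^*A(\mu_i)V$ via the equality case of Courant--Fischer and the injectivity of $V$, and then apply the first-order perturbation formula to the small matrix. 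Both arguments are sound; each step of yours checks out (in particular the simplicity argument, which is indeed the only delicate point, is handled correctly, and the WLOG reduction to $w\perp w_i$ is legitimate because the matrix is Hermitian). What the paper's squeeze buys is brevity and complete independence from the spectral structure of the projected matrix. What your argument buys is more information: it establishes that the smallest Ritz value at a sample point is a simple eigenvalue of $V^*A(\mu_i)V$ whenever $\lambda_i$ is simple for $A(\mu_i)$ --- a fact of independent use in the analysis of the lower bounds in Section~\ref{sec:subspace_interpolation} --- and it does not rely on the upper bound $\lub$ or on Theorem~\ref{thm:upper_hermite} at all, so it would survive in settings where the discrete set $\ycalub(C_J)$ is not available.
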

	\begin{proof}
	By the assumptions, $\mu_i$ is an interior point of $D$ and~\eqref{eq:cascade} implies that
	the inequality 
  $\lmin(A(\mu)) \leq \lsub(\mu;C_J,\ell) \leq \lub(\mu;C_J)$
holds for all $\mu$ in a neighbourhood of $\mu_i$. Combined with the result $\nabla \lmin(A(\mu_i)) = \nabla \lub(\mu_i;C_J)$ of Theorem~\ref{thm:upper_hermite},
this implies $\nabla \lsub(\mu_i;C_J,\ell) = \nabla \lmin (A(\mu_i))$.
	\end{proof}
	
	In contrast to SCM, it turns out that the subspace lower bounds also interpolate the derivative of $\lminmu$ at $\mu \in C_J$.
	To show this, we need the following lemma.

\begin{lemma}\label{lem:region}
Let $\mu_i \in C_J$
satisfy the assumptions of Theorem~\ref{thm:upper_hermite}.
For any $\varepsilon > 0$, there is a neighbourhood $\Omega \subseteq D$ around $\mu_i$ such that 
\begin{eqnarray}
	\big|\lambda_i - \lambda_{\calV}^{(1)}(\mu)| &\leq& \varepsilon \label{eq:st_region_1},\\
	\lambda_i^{(2)} - \eta(\mu)  &\leq&  \varepsilon \label{eq:st_region_2},
\end{eqnarray}
hold for all $\mu \in \Omega$, where $\lambda_{\calV}^{(1)}(\cdot)$ and $\eta(\cdot)$ are defined as in Section~\ref{sec:subspace_lb}.
\end{lemma}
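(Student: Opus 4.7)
The plan is to handle the two inequalities separately. Bound~\eqref{eq:st_region_1} will follow from continuity of the smallest Ritz value at $\mu_i$, while~\eqref{eq:st_region_2} will require computing $\eta(\mu_i)$ explicitly and then a local stability argument for the LP defining $\eta$.

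For~\eqref{eq:st_region_1}, I would observe that $V^*A(\mu)V=\sum_{q=1}^Q\theta_q(\mu)V^*A_qV$ depends continuously on $\mu$ at $\mu_i$, because each $\theta_q$ is differentiable and hence continuous at $\mu_i$. Since the smallest eigenvalue of a Hermitian matrix depends continuously on its entries, $\lambda_{\calV}^{(1)}(\mu)=\lambda_{\min}(V^*A(\mu)V)$ is continuous at $\mu_i$. The Rayleigh--Ritz principle applied with $v_i=v_i^{(1)}\in\calV(C_J,\ell)$ yields $\lambda_{\calV}^{(1)}(\mu_i)=\lambda_i$, so~\eqref{eq:st_region_1} holds on a sufficiently small neighbourhood of $\mu_i$.

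For~\eqref{eq:st_region_2}, I would first evaluate $\eta(\mu_i)$. The equality $\llb(\mu_i;C_J)=\lambda_i$ forces $\theta(\mu_i)^Ty\ge\lambda_i$ to be active at every optimizer $y_{\mu_i}$ of the SCM LP, so under the standing non-degeneracy assumption on $\ycallb(C_J)$ it contributes a row to the system~\eqref{eq:linearsystemactive}. Applying Lemma~\ref{lemma:lowerbound} at $\mu=\mu_i$: since $v_i^{(1)},\dots,v_i^{(\ell)}\in\calV(C_J,\ell)$ are exact eigenvectors of $A(\mu_i)$, Rayleigh--Ritz exactness identifies $\calU(\mu_i)=\spn\{v_i^{(1)},\dots,v_i^{(r)}\}$. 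A direct computation then gives $V_i^*UU^*V_i=\diag(1,\dots,1,0,\dots,0)$ with $r$ leading ones, so the matrix in Lemma~\ref{lemma:lowerbound} is diagonal with entries $\lambda_i^{(\ell+1)}-\lambda_i$ (repeated $r$ times) and $\lambda_i^{(r+1)}-\lambda_i,\dots,\lambda_i^{(\ell)}-\lambda_i$. This yields $\beta_i=\lambda_i^{(r+1)}-\lambda_i$, the updated right-hand side at the row for $\mu_i$ becomes $\lambda_i^{(r+1)}$, and therefore $\eta(\mu_i)=\theta(\mu_i)^T\widecheck y_{\mu_i}=\lambda_i^{(r+1)}\ge\lambda_i^{(2)}$.

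The final step extends this pointwise inequality to a neighbourhood. Non-degeneracy of $\ycallb(C_J)$ makes $y_{\mu_i}$ the unique optimal vertex with exactly $Q$ linearly independent active constraints, and $\theta(\mu_i)$ lies in the interior of its normal cone; continuity of $\theta$ then gives a neighbourhood on which the same vertex stays optimal, so $\Theta$ and the index set of active constraints are locally constant. Simplicity of $\lambda_i$ as an eigenvalue of $A(\mu_i)$, which is inherited by its value as a Ritz value of $V^*A(\mu_i)V$, ensures that the Ritz vector and hence $U(\mu)$ depend continuously on $\mu$ near $\mu_i$; thus each $\beta_j(\mu)$ and the updated right-hand side $\tilde\psi(\mu)$ are continuous, and so is $\eta(\mu)=\theta(\mu)^T\Theta^{-1}\tilde\psi(\mu)$. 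Combining continuity with $\eta(\mu_i)\ge\lambda_i^{(2)}$ delivers~\eqref{eq:st_region_2} on a sufficiently small neighbourhood, and intersecting with the neighbourhood from the first part yields the required $\Omega$. The main obstacle is precisely this continuity of $\eta$: a priori both the LP's active set and the Ritz subspace could jump with $\mu$, but the non-degeneracy assumption on $\ycallb(C_J)$ controls the former and the simplicity of $\lambda_i$ controls the latter.
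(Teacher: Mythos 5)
Your treatment of \eqref{eq:st_region_1} matches the paper and is fine. The genuine gap is in your stability argument for \eqref{eq:st_region_2}. You claim that non-degeneracy of $\ycallb(C_J)$ makes $y_{\mu_i}$ the \emph{unique} optimal vertex, with $\theta(\mu_i)$ in the interior of its normal cone, so that the active set is locally constant and $\eta$ is continuous at $\mu_i$. This fails precisely at a sample point: since $\llb(\mu_i;C_J)=\lambda_i$, the objective direction $\theta(\mu_i)$ is the normal of the facet $\{y\in\ycallb(C_J):\theta(\mu_i)^Ty=\lambda_i\}$, so the \emph{entire} facet is optimal; for $Q\ge 2$ a non-degenerate facet has several vertices, each of them is optimal, and $\theta(\mu_i)$ lies on the \emph{boundary} of each of their normal cones, never in the interior. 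Consequently the selected vertex, the active set, $\Theta$, and hence $\widecheck y_\mu$ generically jump as $\mu$ crosses $\mu_i$ --- this is exactly the mechanism behind the kink of $\llbmu$ at $\mu=\pi/2$ in Example~\ref{ex:lb_non_interpolation} --- so the continuity of $\eta$ on which your final step rests is not available. The paper's proof avoids it: it shows that for \emph{whatever} optimal vertex is chosen, as long as the constraint $\theta(\mu_i)^Ty=\lambda_i$ remains active (which holds near $\mu_i$ because the normal cones of the vertices of that facet cover a neighbourhood of $\theta(\mu_i)$), one has $\theta(\mu_i)^T\widecheck y_\mu=\lambda_i+\beta_i\ge\lambda_i^{(2)}-\varepsilon/2$, and then passes from $\theta(\mu_i)^T\widecheck y_\mu$ to $\theta(\mu)^T\widecheck y_\mu$ using only boundedness of $\widecheck y_\mu$ and continuity of $\theta$; no continuity of $\mu\mapsto\widecheck y_\mu$ is required.

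A secondary issue: your exact evaluation $\eta(\mu_i)=\lambda_i^{(r+1)}$ rests on the identification $\calU(\mu_i)=\spn\{v_i^{(1)},\dots,v_i^{(r)}\}$, which needs $r\le\ell$ and a gap $\lambda_i^{(r+1)}>\lambda_i^{(r)}$; neither is assumed in the lemma (they appear only later, in Theorem~\ref{thm:subspace_lower_hermite}). Under the lemma's hypotheses one only gets $v_i^{(1)}\in\calU(\mu_i)$ from the simplicity of $\lambda_i$, whence $\calU^\perp(\mu)$ is nearly contained in $\{v_i^{(1)}\}^\perp$ for $\mu$ near $\mu_i$ and $\beta_i\ge\lambda_i^{(2)}-\lambda_i-\varepsilon/2$ --- the one-sided estimate, which is all that is needed. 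Likewise, continuity of $U(\mu)$ for $r\ge 2$ would require a Ritz-value gap at position $r$ that the stated assumptions do not guarantee.
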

\begin{proof}
By construction, $\lambda_{\calV}^{(1)}(\mu_i) = \lambda_i$ and thus the continuity of the smallest eigenvalue implies that~\eqref{eq:st_region_1}
holds for all $\mu$ in some neighbourhood $\Omega_1$ around $\mu_i$.
It remains to prove~\eqref{eq:st_region_2}.

In the LP~\eqref{eq:minimization_bound} for determining $\lambda_{\textrm{LB}}(\mu_i;C_J)$, which is trivially given by $\lambda_i$, the constraint 
$\theta(\mu_i)^Ty = \lambda_i$ is active. 
Since we assumed that $\ycallb(C_J)$ is a simple polytope with no degenerate facets,
the continuity of $\theta(\mu)$ implies that this constraint remains active in a neighbourhood 
$\Omega_2$: $\theta(\mu_i)^Ty_\mu = \lambda_i$ for all $\mu \in \Omega_2$, where
$y_\mu$ is a minimizer of~\eqref{eq:minimization_bound} for determining $\llb(\mu;C_J)$.
	
	By~\eqref{eq:firstinequality}, the value of $\beta_i$ defined in Lemma~\ref{lemma:lowerbound}
	satisfies
	\[
	\beta_i =
	 \min_{u_\perp \in \calU^\perp(\mu) \atop \|u_\perp \|_2 = 1} u_\perp^* V_i \Lambda_i V_i^* u_\perp + \lambda_i^{(\ell + 1)}
u_\perp^* (I - V_i V_i^*) u_\perp - \lambda_i.\]
The eigenvector $v_i^{(1)}$ belonging to the eigenvalue $\lambda_i = \lambda_{\min}(A(\mu_i))$
is contained in $\calU$ for $\mu = \mu_i$. Once again the simplicity of $\lambda_i$ implies
that the angle between $v_i^{(1)}$ and $\calU$ becomes arbitrarily small as $\mu$ approaches $\mu_i$.
Therefore, for any $\varepsilon > 0$, there is a neighbourhood $\Omega_3$ such that
\[
 \beta_i \ge  \min_{u_\perp \perp v_i^{(1)} \atop \|u_\perp\|_2 = 1} u_\perp^* V_i \Lambda_i V_i^*u_\perp + \lambda_i^{(\ell + 1)}
u_\perp^* (I - V_i V_i^*) u_\perp - \lambda_i - \frac{\varepsilon}{2} = \lambda_i^{(2)} - \lambda_i - \frac{\varepsilon}{2}.
\]
In summary, the vector $\widecheck y_{\mu}$ defined in~\eqref{eq:checky}
satisfies
\begin{equation} \label{eq:bound398}
  \theta(\mu_i)^T \widecheck y_{\mu} = \lambda_i + \beta_i
  \ge \lambda_i + \lambda_i^{(2)} - \lambda_i - \frac{\varepsilon}{2} = \lambda_i^{(2)} - \frac{\varepsilon}{2}. 
\end{equation}

By the invertibility of $\Theta$, the vector $\widecheck y_{\mu}$ remains bounded in
the vicinity of $\mu_i$. Together with the continuity of $\theta(\mu)$, this implies that there is 
a neighbourhood $\Omega_4$ such that 
\[
	|(\theta(\mu) -\theta(\mu_i))^T \widecheck y_{\mu} | \le \frac{\varepsilon}{2}, \quad \forall \mu \in \Omega_4.
\]
Combined with~\eqref{eq:bound398}, this yields
\[
 \eta(\mu) = \theta(\mu)^T \widecheck y_{\mu} \ge \lambda_i^{(2)} - \varepsilon,
\]
which establishes~\eqref{eq:st_region_2}. Setting $\Omega = \Omega_1\cap\Omega_2\cap\Omega_3\cap\Omega_4$ completes the proof.
%
\end{proof}

The following theorem establishes the Hermite interpolation property of the subspace lower bounds.
\begin{theorem}\label{thm:subspace_lower_hermite}
Let $\mu_i \in C_J$ satisfy the assumptions of Theorem~\ref{thm:upper_hermite}
and, additionally, suppose that $r \le \ell$ and $\lambda_i^{(r+1)} > \lambda_i^{(r)}$. Then
\[
			\nabla \lslb(\mu_i;C_J,\ell) = \nabla \lambda_{\min}(A(\mu_i)).
\]	
\end{theorem}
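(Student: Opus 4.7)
The plan is to exploit the decomposition $\lslb(\mu;C_J,\ell) = \min(\lambda_{\calV}^{(1)}(\mu), \eta(\mu)) - g(\mu)$, where $g(\mu)$ denotes the perturbation term of~\eqref{eq:subspace_lower_bound}, and to show that on a neighbourhood $\Omega$ of $\mu_i$ the minimum reduces to $\lambda_{\calV}^{(1)}(\mu) = \lsub(\mu;C_J,\ell)$ while $g$ is Fr\'echet differentiable at $\mu_i$ with vanishing gradient. Combined with Corollary~\ref{cor:subspace_upper_bound}, this yields the claim.

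First I would verify the values at $\mu_i$: since $v_i^{(1)} \in \calV(C_J,\ell)$, Cauchy interlacing forces $\lambda_{\calV}^{(1)}(\mu_i) = \lambda_i$; the hypothesis $r \leq \ell$ together with $\lambda_i^{(r+1)} > \lambda_i^{(r)}$ implies that the $r$ smallest Ritz vectors at $\mu_i$ span $\spn\{v_i^{(1)},\ldots,v_i^{(r)}\}$, so $U(\mu_i)$ consists of exact eigenvectors of $A(\mu_i)$ and $\rho(\mu_i)=0$. Lemma~\ref{lem:region}, applied with $\varepsilon < (\lambda_i^{(2)}-\lambda_i)/4$, then produces a neighbourhood $\Omega$ on which $\eta(\mu) - \lambda_{\calV}^{(1)}(\mu) \geq \delta_0 > 0$. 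This both shows $\min(\lambda_{\calV}^{(1)}(\mu),\eta(\mu)) = \lambda_{\calV}^{(1)}(\mu)$ throughout $\Omega$ and provides the uniform bound $0 \leq g(\mu) \leq 2\rho(\mu)^2/\delta_0$.

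The main technical step is the quadratic decay $\rho(\mu)^2 = O(\|\mu-\mu_i\|^2)$. Because Cauchy interlacing gives $\lambda_{\calV}^{(r)}(\mu_i) = \lambda_i^{(r)} < \lambda_i^{(r+1)} \leq \lambda_{\calV}^{(r+1)}(\mu_i)$, the reduced matrix $V^*A(\mu)V$ retains a positive spectral gap between its $r$-th and $(r+1)$-th eigenvalues in a (possibly smaller) neighbourhood, so the associated Ritz projector $U(\mu)U(\mu)^*$ depends differentiably on $\mu$ there. The residual $R(\mu) := (I - U(\mu)U(\mu)^*)A(\mu)U(\mu)$ is consequently differentiable at $\mu_i$ with $R(\mu_i)=0$, so $\|R(\mu)\|_F = O(\|\mu-\mu_i\|)$ and therefore $\rho(\mu)^2 \leq \|R(\mu)\|_F^2 = O(\|\mu-\mu_i\|^2)$.

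It follows that $g(\mu) = O(\|\mu-\mu_i\|^2)$ with $g(\mu_i) = 0$, so $g$ is Fr\'echet differentiable at $\mu_i$ with $\nabla g(\mu_i) = 0$. On $\Omega$ we have $\lslb(\mu;C_J,\ell) = \lsub(\mu;C_J,\ell) - g(\mu)$, whence Corollary~\ref{cor:subspace_upper_bound} yields $\nabla \lslb(\mu_i;C_J,\ell) = \nabla \lsub(\mu_i;C_J,\ell) = \nabla \lambda_{\min}(A(\mu_i))$. The main obstacle is precisely the quadratic decay of $\rho^2$: it rests on the smoothness of the Ritz projector near $\mu_i$, which is exactly what the assumption $\lambda_i^{(r+1)} > \lambda_i^{(r)}$ is designed to guarantee.
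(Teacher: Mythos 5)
Your proposal is correct and follows essentially the same route as the paper: use Lemma~\ref{lem:region} to force $\eta(\mu)-\lambda_{\calV}^{(1)}(\mu)\ge\delta_0>0$ near $\mu_i$ so that $\lslb=\lsub-g$ with $\delta$ bounded below, invoke Corollary~\ref{cor:subspace_upper_bound} for $\nabla\lsub(\mu_i)$, and conclude by showing $\rho=O(\|\mu-\mu_i\|_2)$ so that $\nabla(\rho^2)(\mu_i)=0$. The only (welcome) difference is that where the paper appeals in one line to ``standard perturbation results for invariant subspaces,'' you justify the linear decay of $\rho$ explicitly via the interlacing identities $\lambda_{\calV}^{(k)}(\mu_i)=\lambda_i^{(k)}$ for $k\le\ell$, the resulting gap in $V^*A(\mu)V$ at position $r$, and the differentiability of the Ritz projector, which makes the role of the hypotheses $r\le\ell$ and $\lambda_i^{(r+1)}>\lambda_i^{(r)}$ fully transparent.
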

\begin{proof}
By Lemma~\ref{lem:region} and the simplicity of $\lambda_{\min}(A(\mu_i))$, there is $\delta_0 > 0$ such that 
$\eta(\mu) \ge \lambda_{\calV}^{(1)}(\mu) + \delta_0$ for $\mu$ sufficiently close to $\mu_i$. Hence, 
the subspace lower bound~\eqref{eq:subspace_lower_bound} is given by
\begin{equation} \label{eq:subspace_lower_bound_simple}
 \lslb(\mu;C_J,\ell) = \lambda_{\calV}^{(1)}(\mu) - \frac{2\rho^2}{\delta + \sqrt{\delta^2 + 4\rho^2}}, 
\end{equation}
with $\rho = \|U^\perp A(\mu) U\|_2$ and $\delta = |\lambda_{\calV}^{(1)}(\mu) - \eta(\mu)| \ge \delta_0$. 
By Corollary~\ref{cor:subspace_upper_bound}, 
we have
\[
\nabla \lambda_{\calV}^{(1)}(\mu_i) = \nabla \lsub(\mu_i;C_J,\ell) = \nabla \lmin(A(\mu_i)).
\]
Since $\delta$ is bounded from below, the result follows from~\eqref{eq:subspace_lower_bound_simple} if the
gradient of $\rho^2$ is zero. The assumptions $\lambda_i^{(r+1)} > \lambda_i^{(r)}$ and $r\le \ell$ imply that the
invariant subspace belonging to the $r$ smallest eigenvalues of $A(\mu_i)$ is simple and contained in $V$. Hence, standard perturbation results for invariant subspaces~\cite{Stewart1990} yield $\rho = O(\|\mu - \mu_i\|_2)$
for $\mu\to \mu_i$ and therefore $\nabla \rho^2 = 0$, which completes the proof. 
\end{proof}

\subsubsection{A priori convergence in the one-parameter case}

In the following, we analyse the convergence of our subspace bounds
for a special case: We assume that $A(\mu)$
depends analytically on one parameter $\mu \in [-1,1]$ and, moreover,
the eigenvalue $\lambda_{\min}(A(\mu))$ is simple for all $\mu \in [-1,1]$.

Let $E_{R}$ denote the open elliptic disc with foci $\pm 1$ and
the sum of its half axes equal to $R$.
Under the above assumptions, there is $R_0 > 1$ such that the (suitably normalized)
eigenvector $v(\mu)$ belonging to $\lambda_{\min}(A(\mu))$ admits
an analytic extension $v: E_{R_0} \to \C^N$; see, e.g.,~\cite{Kato1995,Reed1978}.
Note that $v$ can be chosen to have norm $1$ on $[-1,1]$, see~\cite[Theorem XII.4]{Reed1978},
but this is not the case on $E_{R_0}$ in general.
Let $C_J=\{\mu_1,\dots,\mu_J\}$ contain the Chebyshev nodes $\mu_i =\cos(\frac{2i-1}{2 J}\pi)$
and set $v_i := v(\mu_i)$. The corresponding vector-valued interpolating
polynomial is given by
\begin{equation} \label{eq:interpoly}
 p_J(\mu) = \ell_1(\mu) v_1 + \cdots + \ell_J(\mu) v_J
\end{equation}
with the Lagrange polynomials $\ell_1,\ldots, \ell_J: [-1,1]\to \R$.
By extending a standard interpolation error result~\cite[Corollary 6.6A]{Mason2003} to vector-valued
functions (see, e.g.,~\cite[Lemma 2.2]{Kressner2011} for a similar extension),
one can show that
\begin{equation} \label{eq:interperror}
  \max_{\mu \in [-1,1]} \| v(\mu) - p_J(\mu) \|_2 \le 
 \frac{(R+R^{-1})M}{(R^{J+1} - R^{-J-1})(R +R^{-1}-2) }
\end{equation}
holds for any $1 < R < R_0$, with $M = \sup\limits_{z \in E_R} \|v(z)\|_2$.
This result is utilized in the proof of the following theorem, which shows exponential
convergence of our subspace bounds.
\begin{theorem} \label{theorem:}
 Under the setting described above, the subspace lower and upper bounds
 for $\ell = r = 1$ satisfy
\begin{eqnarray} \label{eq:upperexpconvergence}
 \lsub(\mu;C_J,1)-\lminmu &\leq& C_U\, R^{-2 J}  \\
 \lminmu - \lslb(\mu;C_J,1) &\leq& C_L\, R^{-2 J} \label{eq:lowerexpconvergence}
\end{eqnarray}
for every $\mu \in [-1,1]$, 
with constants $C_U, C_L$ independent of $J$ 
and $\mu$.
\end{theorem}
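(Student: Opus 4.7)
The plan is to build an explicit ansatz vector inside $\calV(C_J,1)$ out of the Chebyshev interpolant in~\eqref{eq:interpoly}. Since $v_1,\dots,v_J \in \calV(C_J,1)$, every value $p_J(\mu) = \sum_i \ell_i(\mu) v_i$ lies in $\calV(C_J,1)$, and~\eqref{eq:interperror} gives $\|p_J(\mu) - v(\mu)\|_2 \le c_1\, R^{-J}$ uniformly in $\mu \in [-1,1]$, with $c_1$ depending on $R$ and $R_0$ but not on $J$. The upper bound~\eqref{eq:upperexpconvergence} will then follow from a single Rayleigh-quotient perturbation, while the lower bound~\eqref{eq:lowerexpconvergence} will follow by showing that the residual $\rho$ in~\eqref{eq:subspace_lower_bound} inherits the decay $\rho = O(R^{-J})$ and that the gap $\delta$ there stays uniformly bounded below for $J$ large.

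For~\eqref{eq:upperexpconvergence}, I would write $p_J(\mu) = v(\mu) + e_J(\mu)$ with $\|e_J(\mu)\|_2 = O(R^{-J})$ and use $\|v(\mu)\|_2 = 1$ on $[-1,1]$ to get $\|p_J(\mu)\|_2 \ge 1/2$ for $J$ large. Since $A(\mu) v(\mu) = \lminmu v(\mu)$ and $A(\mu)$ is Hermitian, a direct expansion yields
\[
\frac{p_J(\mu)^* A(\mu) p_J(\mu)}{p_J(\mu)^* p_J(\mu)} - \lminmu \;=\; \frac{e_J(\mu)^*\bigl(A(\mu) - \lminmu I\bigr) e_J(\mu)}{\|p_J(\mu)\|_2^2}.
\]
The right-hand side is nonnegative because $A(\mu) - \lminmu I$ is positive semidefinite, and it is bounded by $\|A(\mu) - \lminmu I\|_2 \|e_J(\mu)\|_2^2 / \|p_J(\mu)\|_2^2 = O(R^{-2J})$ since $\|A(\mu) - \lminmu I\|_2$ is uniformly bounded on the compact interval. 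Because $p_J(\mu) \in \calV(C_J,1)$ and $\lsub(\mu;C_J,1)$ is the minimum of the Rayleigh quotient over that subspace, sandwiching $\lminmu \le \lsub(\mu;C_J,1) \le p_J(\mu)^*A(\mu)p_J(\mu)/p_J(\mu)^*p_J(\mu)$ gives~\eqref{eq:upperexpconvergence}.

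For~\eqref{eq:lowerexpconvergence}, I would first control $\rho$. Let $\gamma := \min_{\mu \in [-1,1]}(\lambda_2(A(\mu)) - \lminmu)$, which is strictly positive by the simplicity assumption together with compactness. The upper bound just established shows that $\lambda_{\calV}^{(1)}(\mu)$ is isolated from $\lambda_2(A(\mu))$ by at least $\gamma/2$ once $J$ is large, so a classical $\sin\Theta$-style Ritz-vector perturbation estimate yields $\sin\angle(\tilde v(\mu), v(\mu)) = O(\sin \angle(v(\mu), \calV(C_J,1))) = O(R^{-J})$, where $\tilde v(\mu)$ spans $\calU(\mu)$. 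Writing $\tilde v(\mu) = \alpha v(\mu) + \beta w(\mu)$ with $w \perp v$ and $\|w\|_2 = 1$, so that $|\beta| = O(R^{-J})$, the identity
\[
\rho^2 \;=\; |\alpha|^2 \bigl(\lminmu - \lambda_{\calV}^{(1)}(\mu)\bigr)^2 + |\beta|^2 \|(A(\mu) - \lambda_{\calV}^{(1)}(\mu)) w\|_2^2
\]
immediately gives $\rho = O(R^{-J})$.

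The main obstacle is to verify the uniform bound $\delta(\mu) = |\lambda_{\calV}^{(1)}(\mu) - \eta(\mu)| \ge \gamma/4$ for all $\mu \in [-1,1]$ and $J$ sufficiently large. Because the maximal spacing between Chebyshev nodes is $O(1/J)$, every $\mu$ lies within $O(1/J)$ of some node $\mu_i$, and I would apply Lemma~\ref{lem:region} with $\varepsilon = \gamma/4$, together with continuity of $\lambda_2(A(\cdot))$, to get $\eta(\mu) \ge \lambda_i^{(2)} - \gamma/4 \ge \lminmu + \gamma/2$ on a neighbourhood of $\mu_i$. The delicate point is that Lemma~\ref{lem:region} is local, with neighbourhood size depending on the active-constraints matrix $\Theta$ at $\mu_i$ and hence a priori on $J$; showing that these neighbourhoods cover $[-1,1]$ for $J \ge J_0$ requires Lipschitz control of $\widecheck y_\mu$ in terms of $\|\Theta^{-1}\|$ and of $\theta(\mu)$. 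Once this is in place, substituting $\rho = O(R^{-J})$ and $\delta \ge \gamma/4$ into~\eqref{eq:subspace_lower_bound} together with $|\lminmu - \lambda_{\calV}^{(1)}(\mu)| = O(R^{-2J})$ yields~\eqref{eq:lowerexpconvergence} for $J \ge J_0$; the finitely many smaller values of $J$ are absorbed by enlarging $C_L$.
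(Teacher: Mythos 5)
Your proposal follows essentially the same route as the paper's proof: both use the Chebyshev interpolant $p_J(\mu)\in\calV(C_J,1)$ and \eqref{eq:interperror} to get $O(R^{-J})$ subspace approximation, deduce quadratic Ritz-value convergence for the upper bound, show $\rho\lesssim R^{-J}$, and combine this with a uniform lower bound on the gap $\delta$ in \eqref{eq:subspace_lower_bound}; your explicit Rayleigh-quotient expansion and $\sin\Theta$ argument just spell out what the paper delegates to Parlett. The ``main obstacle'' you flag --- that the uniformity of $\delta\ge\delta_0>0$ in both $\mu$ and $J$ does not follow directly from the local, $J$-dependent neighbourhoods of Lemma~\ref{lem:region} --- is precisely the point the paper itself passes over by asserting that ``the arguments from the proof of Theorem~\ref{thm:subspace_lower_hermite} can be utilized,'' so your treatment is, if anything, slightly more careful than the original.
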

\begin{proof}
For $\ell = 1$, the subspace used in our bounds takes the form $\calV = \text{span}\{v_1,\ldots,v_J\}$.
The interpolating polynomial defined in~\eqref{eq:interpoly} clearly satisfies $p_J(\mu) \in \calV$
and hence~\eqref{eq:interperror} yields the following bound on the angle between $\calV$ and $v(\mu)$:
\begin{equation} \label{eq:approxpower}
  \min_{\tilde v \in \calV} \| \tilde v - v(\mu)  \|_2 \lesssim R^{-J}.
\end{equation}
The inequality~\eqref{eq:upperexpconvergence} now follows from existing approximation results for Ritz values; see, e.g.,~\cite[Chapter 11]{Parlett1998}.

To prove~\eqref{eq:lowerexpconvergence}, we first note that the arguments from the proof of Theorem~\ref{thm:subspace_lower_hermite} can be utilized to show that
\[
 \lslb(\mu;C_J,1) = \lsub(\mu;C_J,1) - \frac{2\rho^2}{\delta + \sqrt{\delta^2 + 4\rho^2}}, 
\]
for sufficiently large $J$, where $\delta > \delta_0 > 0$ for some $\delta_0$ 
not depending on $\mu$ or $J$. Since $r = 1$, the quantity $\rho$
coincides with the residual of the smallest Ritz vector of $A(\mu)$ with respect to
$\calV$. By~\eqref{eq:approxpower} and~\cite[Theorem 11.7.1]{Parlett1998}, 
we have $\rho \lesssim R^{-J}$, which completes the proof.
\end{proof}

The maximal value of the exponent $R$ in~\eqref{eq:upperexpconvergence}--\eqref{eq:lowerexpconvergence}
depends on the gap between the smallest and the second smallest eigenvalue and the variation of $A(\mu)$. This is discussed in more detail for a special case in~\cite[Section 2.3.1]{Andreev2012c}.

\subsection{Residual-based lower bounds} \label{sec:heuristic}

As we will see in the numerical experiments (especially in Example~\ref{example:TBaniso}),
the subspace lower bounds can sometimes converge rather slowly in the initial phase of the algorithm,
in contrast to the subspace upper bounds. This slow convergence can be viewed as a price that needs to be
paid in order maintain the reliability of the lower bounds.
In the following, we will propose an alternative that is heuristic (i.e., its
reliability is not guaranteed) and is observed to converge faster in the initial phase.


The alternative consists of simply subtracting the residual norm from the upper bound:
\begin{equation} \label{eq:heuristiclowerbound}
  \lsub(\mu;C_J,\ell) - \|A(\mu) u - \lsub(\mu;C_J,\ell) u\|_2,
\end{equation}
where $u$ with $\|u\|_2 = 1$ is a Ritz vector belonging to the smallest Ritz value $\lsub(\mu;C_J,\ell)$ of $A(\mu)$ with respect to $\calV$.
%
A basic first-order perturbation result for Hermitian matrices~\cite{Parlett1998} implies that~\eqref{eq:heuristiclowerbound} constitutes a lower bound for \emph{an} eigenvalue of $A(\mu)$,
but not necessarily the smallest one. There is a risk, especially in the very beginning, that~\eqref{eq:heuristiclowerbound} is actually larger than the smallest eigenvalue, see Section~\ref{sec:examples} for examples.
However, in all numerical experiments we have observed that a small number of iterations suffices until~\eqref{eq:heuristiclowerbound} becomes a lower bound for the smallest eigenvalue.

\begin{remark}
When using the residual-based lower bound~\eqref{eq:heuristiclowerbound}, it makes sense to also adjust the error measure~\eqref{eq:error_ratio} that drives the sampling strategy to
	\[
		\max_{\mu \in \Xi} \frac{\|A(\mu) u - \lsub(\mu;C_J,\ell) u\|_2}{|\lsub(\mu;C_J,\ell)|},
	\]
	and stop the iteration when this error estimate drops below $\varepsilon_{\mathrm{SCM}}$.
\end{remark}

\section{Applications and numerical experiments} \label{sec:examples}

In this section, we report on the performance of our proposed approach
for a number of large-scale examples. 
Algorithms~\ref{alg:SCM} and~\ref{alg:subspace} have been implemented in {\sc Matlab} Version 7.14.0.739 (R2012a)
and all experiments have been performed on an Intel Xeon CPU E31225 with
4 cores, 3.1 GHz, and 8 GB RAM.

We compare Algorithm~\ref{alg:subspace} with Algorithm~\ref{alg:SCM} by computing the 
maximum relative error ratio~\eqref{eq:error_ratio}.
Additionally, we compare the convergence of the bounds from Sections~\ref{sec:scm} and~\ref{sec:subspace}
towards the exact smallest eigenvalues by measuring the absolute error
\begin{equation} \label{eq:bounderror}
 \max_{\mu \in \Xi} |\text{bound}(\mu) - \lambda_{\min}(A(\mu))|,
\end{equation}
for the corresponding bound,
both with respect to the number of iterations and with respect to the execution time (in seconds).

When implementing and testing Algorithms~\ref{alg:SCM} and~\ref{alg:subspace}, we have made the following choices. We set the relative tolerance to $\varepsilon_{SCM}=10^{-4}$, the 
maximum number of iterations to $J_{\mathrm{max}}=200$ and the surrogate set $\Xi$ to be 
a random subset of $D$ containing 1000 elements. The smallest eigenpairs of $A(\mu_i)$ 
have been computed using the {\sc Matlab} built-in function {\tt eigs}, which is based on ARPACK~\cite{Lehoucq1998}, with the tolerance set to $10^{-6}$.
The {\sc Matlab} built-in function {\tt linprog} with the tolerance set to $10^{-8}$ has been used for solving all linear programs.
In all experiments, we have used Algorithm~\ref{alg:subspace} with the number of smallest eigenpairs included in $\calV$ set to $\ell=1$, 
since this already provided significant improvements over Algorithm~\ref{alg:SCM}. For choosing $r$ from Section~\ref{sec:subspace_lb}, we have tested all values $r=0,1,\dots,Q$, see Remark~\ref{remark:choiceofr}.

\begin{remark}~\label{remark:scm_optimization}
A slight modification of Algorithm~\ref{alg:SCM} can significantly reduce the time spent on solving linear programs. 
For $\mu\in \Xi$, suppose that $y_{\mathrm{LB}}(\mu)\in\ycallb(C_J)$ is 
a minimizer of~\eqref{eq:minimization_bound} on $\ycallb(C_J)$.
Let $(\lambda_{J+1},v_{J+1})$ be the smallest eigenpair of $A(\mu_{J+1})$ computed in iteration $J+1$.
If $\theta(\mu)^T y_{\mathrm{LB}}(\mu) \geq \lambda_{J+1}$, we have
$y_{\mathrm{LB}}(\mu) \in \ycallb(C_{J+1}) \subseteq \ycallb(C_J)$, making $y_{\mathrm{LB}}(\mu)$
also a minimizer of~\eqref{eq:minimization_bound} on $\ycallb(C_{J+1})$. In this case, we have $\llb(\mu;C_{J+1})=\llb(\mu;C_{J})$
and there is no need to solve the linear program in~\eqref{eq:minimization_bound}.
\end{remark}

\subsection{Random matrices}

We first consider an academic example, where a random dense Hermitian matrix
$A_1\in \C^{\calN\times\calN}$ is perturbed, to a certain extent, by random Hermitian matrices $A_2,\dots,A_Q\in \C^{\calN\times\calN}$:
\begin{equation*}
	A(\mu) = A_1 + \mu_2 A_1 + \dots + \mu_Q A_Q,
\end{equation*}
where $\mu = (\mu_2,\dots,\mu_Q) \in D = [0,\delta]^{Q-1}$.

\begin{example}\label{example:random}
	We consider $Q=4$, $\calN = 1000$, $\delta = 0.2$, with $A_1,A_2,A_3,A_4$ having real random entries from the unit normal distribution. 
	The performances of both algorithms is shown in
	Figure~\ref{fig:random}.
	The convergence of Algorithm~\ref{alg:SCM} flattens after around 25 iterations and 
	does not reach the desired tolerance, while the convergence of Algorithm~\ref{alg:subspace} 
	is much faster and reaches the desired tolerance within 47 iterations.
	We have also considered the optimized version of Algorithm~\ref{alg:SCM}, 
	as described in Remark~\ref{remark:scm_optimization}. To make the comparison fair, 
	we have compared it to a variant of Algorithm~\ref{alg:subspace} where the subspace bounds are recomputed only 
	when $\llb(mu;C_J)$ is recomputed. The influence of these modifications on the performances of both algorithms
	can be seen in Figure~\ref{fig:randomopt}. The optimized version of Algorithm~\ref{alg:subspace} requires 
	a slightly larger number of iterations to converge to prescribed accuracy, but it still 
	outperforms even the optimized version of Algorithm~\ref{alg:SCM}.
	Since Algorithm~\ref{alg:subspace} converges quickly, there is no need to even consider the residual-based lower bounds from Section~\ref{sec:heuristic},
	but we still include the results in Figures~\ref{fig:random} and~\ref{fig:randomopt} for the sake of completeness. Here and in the following, the star denotes the iteration from which on the residual-based lower bound~\eqref{eq:heuristiclowerbound} actually constitutes a lower bound for the smallest eigenvalue.
\end{example}

\begin{figure}[ht]
	\resetsubfigs
	\centering
	\begin{subfigure}[t]{0.30\textwidth}\centering
        	\includegraphics[width=\textwidth]{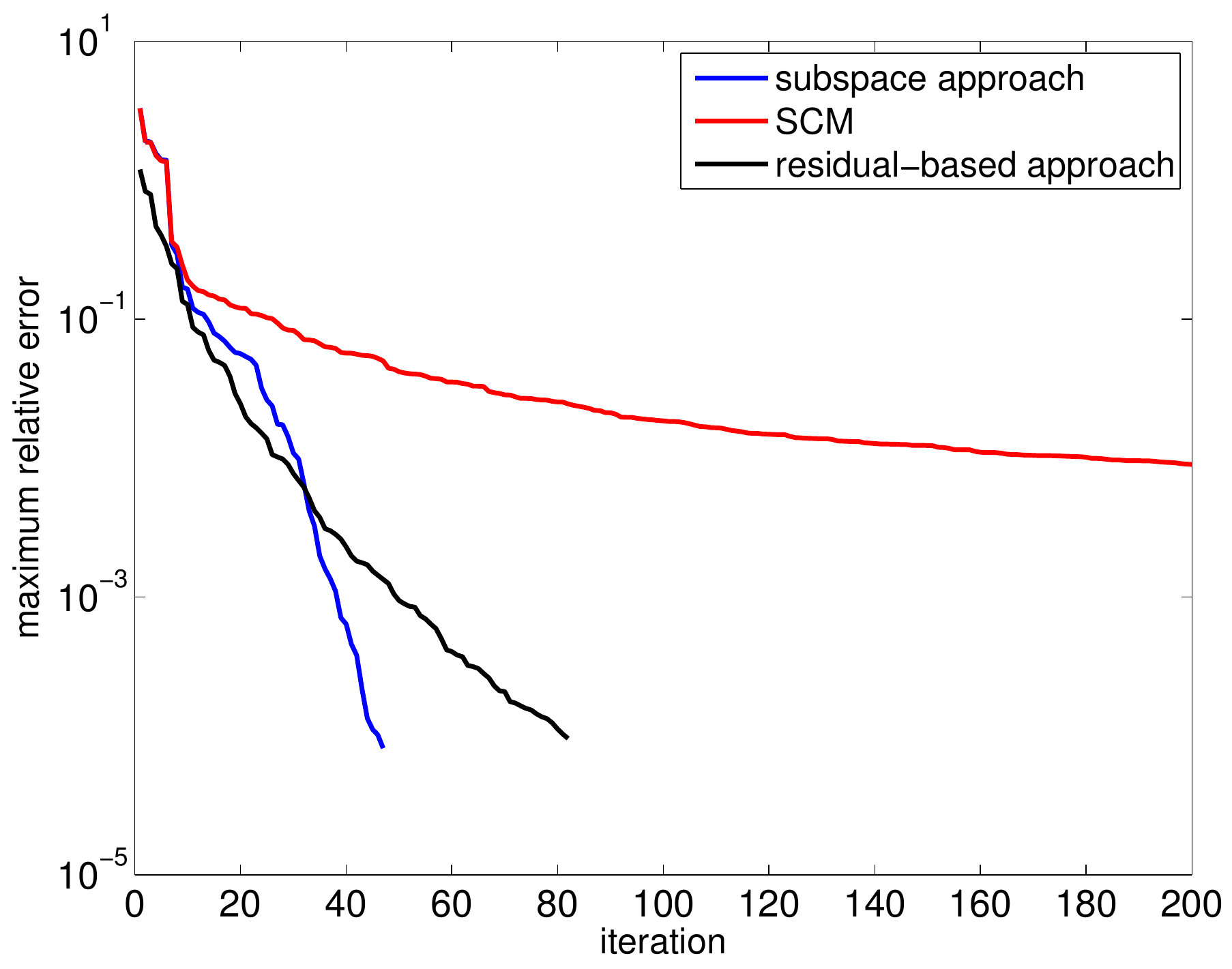}
		\subcaption{Convergence of the maximum relative error ratio~\eqref{eq:error_ratio}.}
		\label{fig:randomerrorratioconv}
	\end{subfigure}
		~
	\begin{subfigure}[t]{0.30\textwidth}
		\includegraphics[width=\textwidth]{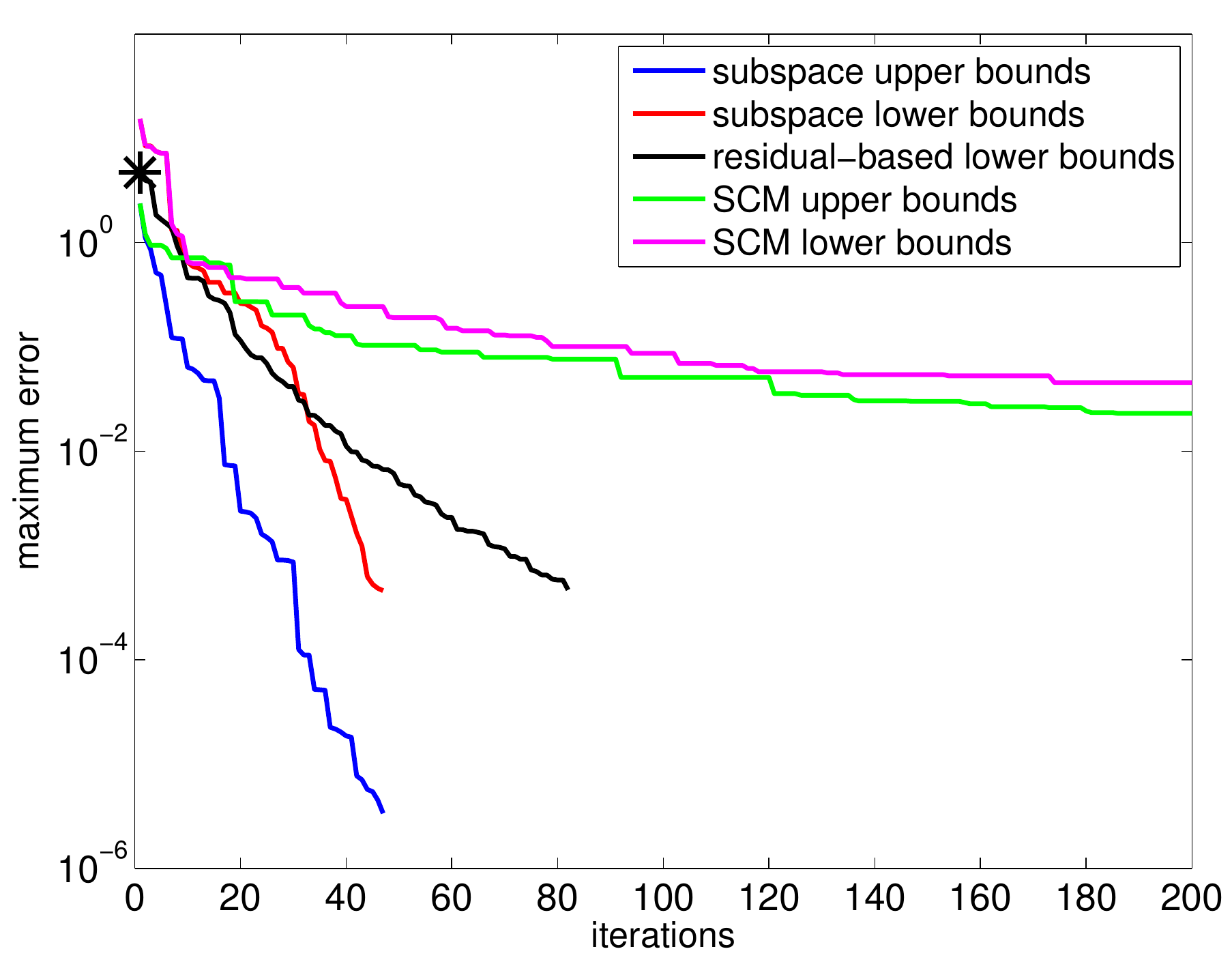}
		\subcaption{Convergence of the error~\eqref{eq:bounderror} for the bounds w.r.t. iteration.}
		\label{fig:randomboundconv}
	\end{subfigure}
	~
	\begin{subfigure}[t]{0.30\textwidth}
		\includegraphics[width=\textwidth]{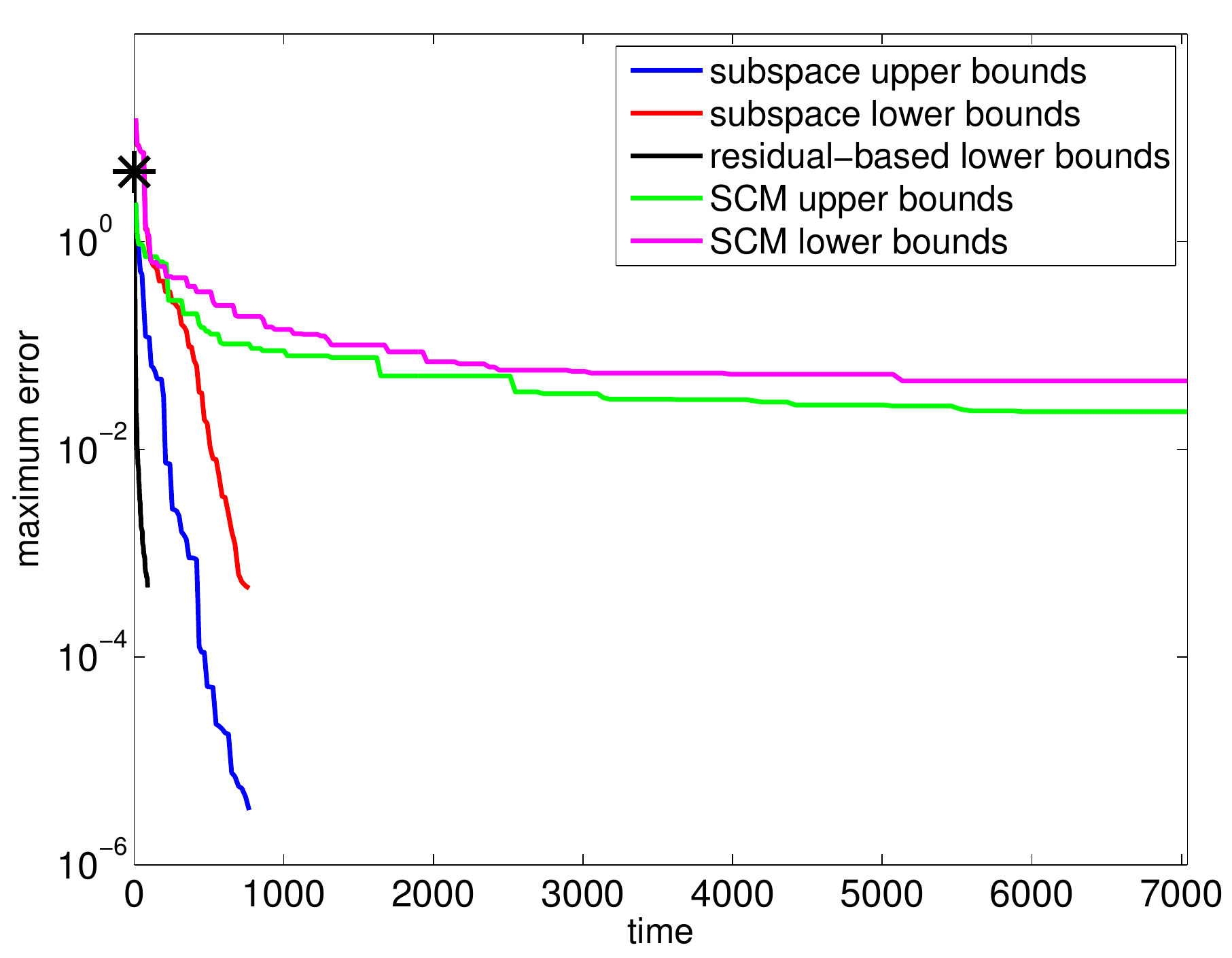}
		\subcaption{Convergence of the error~\eqref{eq:bounderror} for the bounds w.r.t. time.}
		\label{fig:randomtimeerrorratioconv}
	\end{subfigure}
	\caption{Convergence plots for Algorithms~\ref{alg:SCM} and~\ref{alg:subspace} applied to Example~\ref{example:random}.}
	\label{fig:random}
\end{figure}
\begin{figure}[ht]
	\resetsubfigs
	\centering
	\begin{subfigure}[t]{0.30\textwidth}\centering
        	\includegraphics[width=\textwidth]{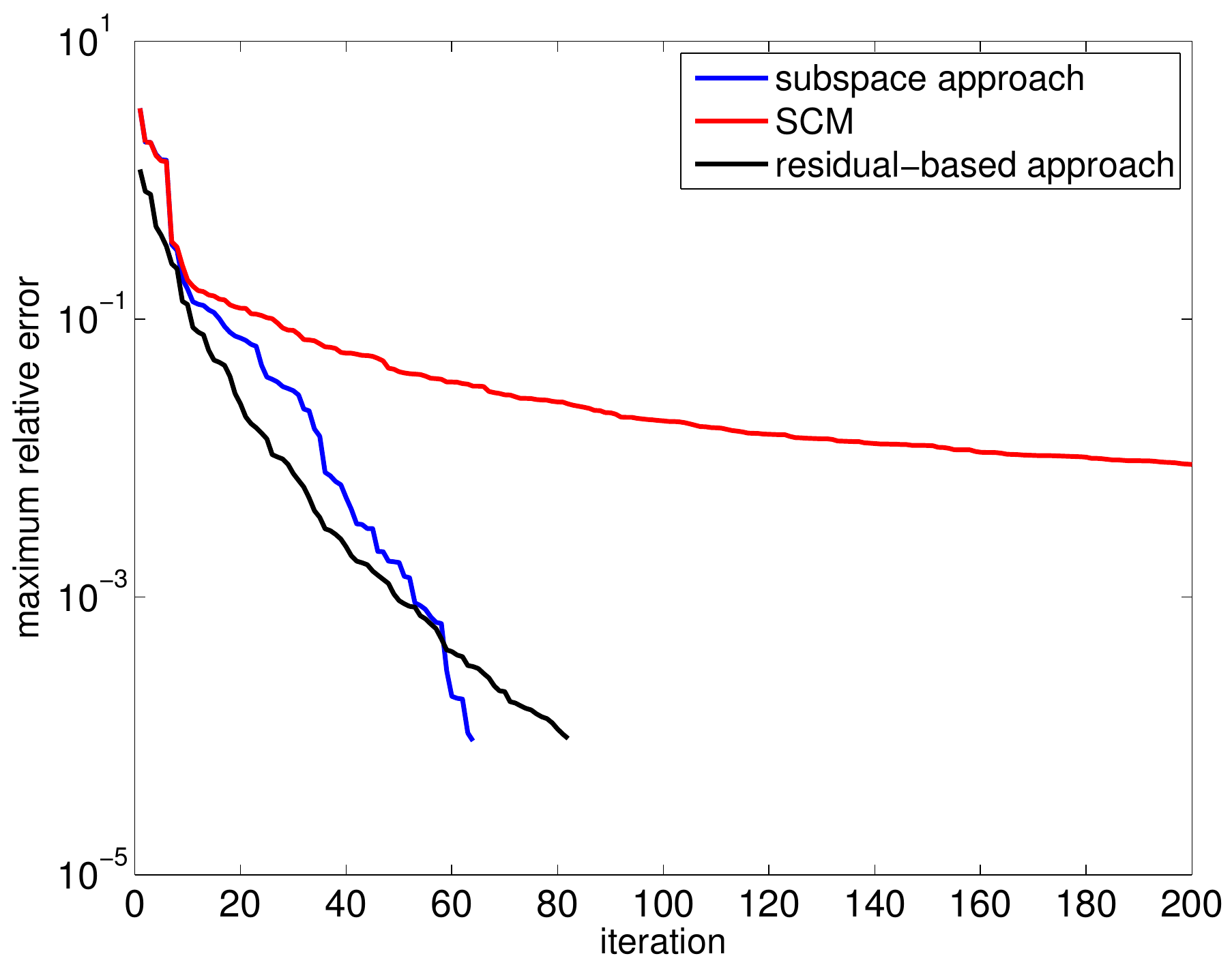}
		\subcaption{Convergence of the maximum relative error ratio~\eqref{eq:error_ratio}.}
		\label{fig:randomopterrorratioconv}
	\end{subfigure}
		~
	\begin{subfigure}[t]{0.30\textwidth}
		\includegraphics[width=\textwidth]{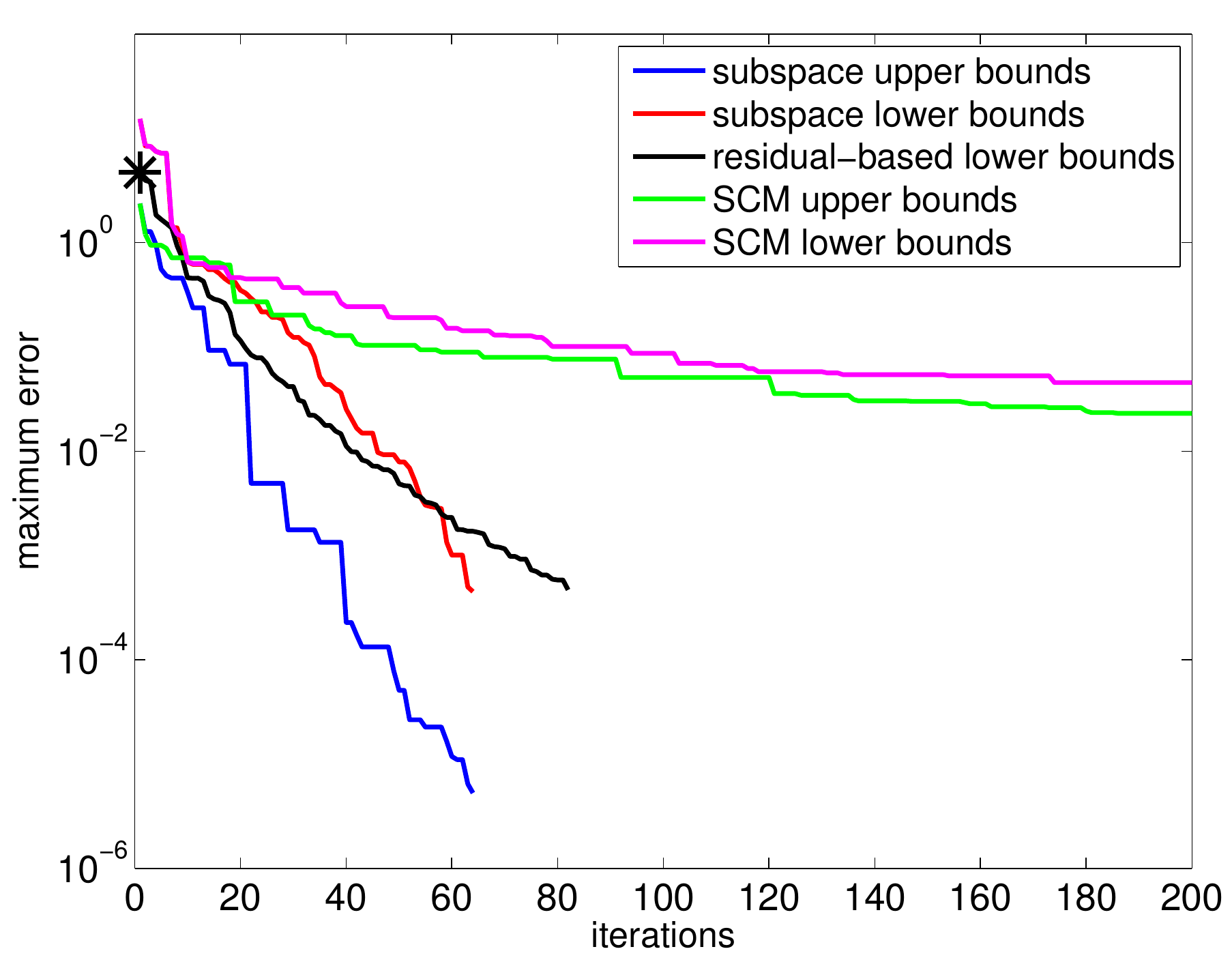}
		\subcaption{Convergence of the error~\eqref{eq:bounderror} for the bounds w.r.t. iteration.}
		\label{fig:randomoptboundconv}
	\end{subfigure}
	~
	\begin{subfigure}[t]{0.30\textwidth}
		\includegraphics[width=\textwidth]{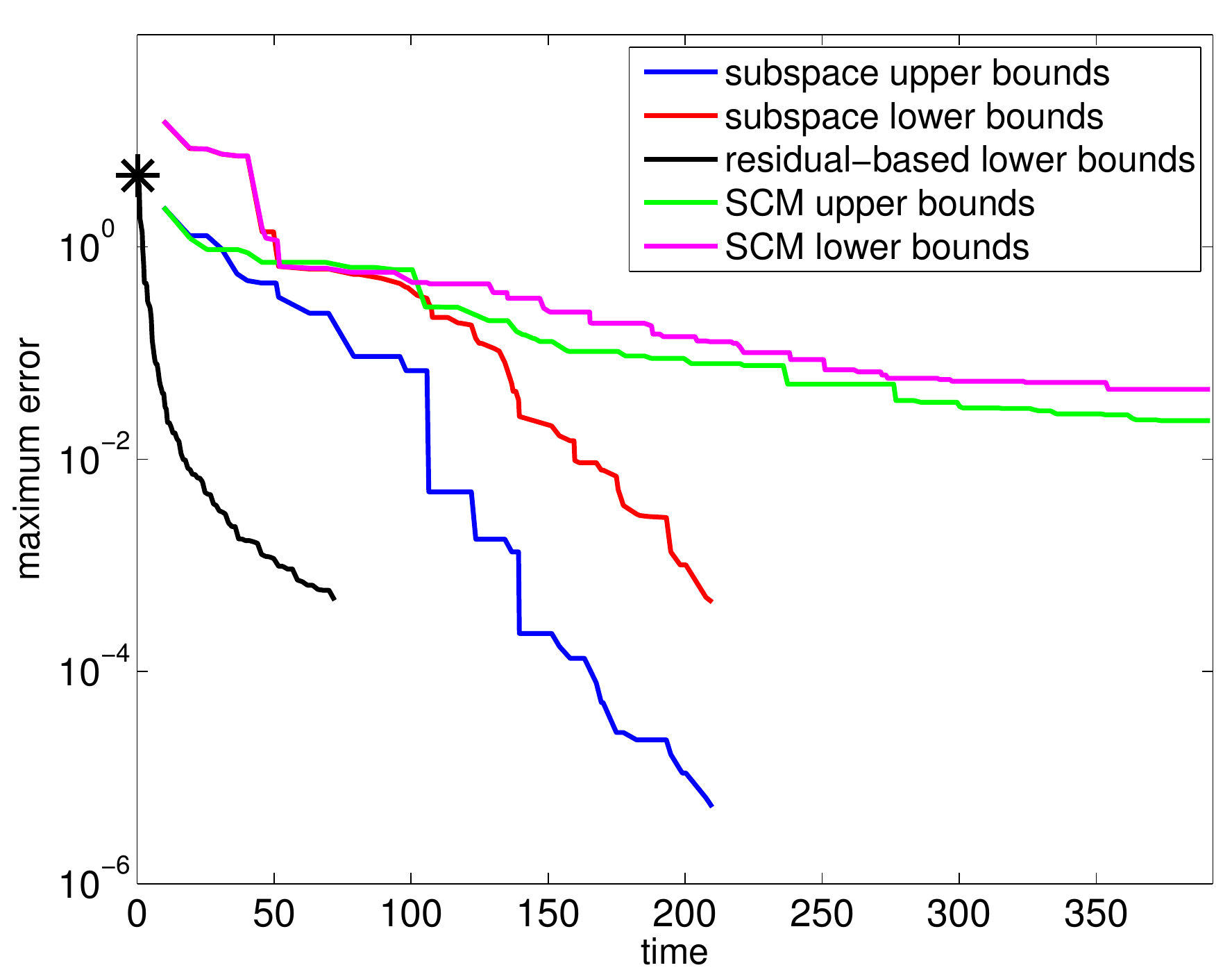}
		\subcaption{Convergence of the error~\eqref{eq:bounderror} for the bounds w.r.t. time.}
		\label{fig:randomopttimeerrorratioconv}
	\end{subfigure}
	\caption{Convergence plots for the optimized versions of Algorithms~\ref{alg:SCM} and~\ref{alg:subspace} 
	applied to Example~\ref{example:random}.}
	\label{fig:randomopt}
\end{figure}

\subsection{Estimation of the coercivity constant} \label{sec:coercivity}

\textit{A posteriori} error estimation in model order reduction techniques for parametrized PDEs, such as reduced basis method, requires reliable estimates for the coercivity constant~\cite{RozH08}
defined as 
\begin{equation}\label{eq:coercivity_constant}
	\alpha(\mu) = \inf_{u \in X} \frac{a(u,u;\mu)}{\|u\|_{X}^2}.
\end{equation}
Here, $a(\cdot,\cdot,\mu)$ is the coercive symmetric bilinear form in the weak formulation of the underlying PDE on the domain $\Omega$ and $X$ 
is the accompanying function space with the norm $\|\cdot\|_X$ induced by the scalar product $(u,v)_X = \tau (u,v)_{L^2(\Omega)} + a(u,v;\bar \mu)$, with $\tau > 0$ and a fixed parameter value $\bar \mu$ chosen to be the center of $D$.
A finite element discretization of~\eqref{eq:coercivity_constant} leads to the minimization problem
\begin{equation} \label{eq:mass_matrix_min}
	\alpha^\calN(\mu) = \inf_{v\in \R^N} \frac{v^TA(\mu)v}{v^TXv}, 
\end{equation}
where $A(\mu), X\in \R^{\calN\times \calN}$ are the matrices discretizing $a(\cdot,\cdot,\mu)$ and  
$(\cdot,\cdot)_X$, respectively.
Minimizing~\eqref{eq:mass_matrix_min} is clearly equivalent to computing the smallest eigenvalue of the generalized eigenvalue problem
	\begin{equation*}
		A(\mu)v = \lambda Xv.
	\end{equation*}
	By computing a (sparse) Cholesky factorization $X=LL^T$, we 
transform~\eqref{eq:par_her_eigp}:
	\begin{equation*}
		L^{-1}A(\mu)L^{-T}w = \lambda w.
	\end{equation*}
	Hence, the matrices $A_i$ appearing in Assumption~\ref{assum:aff_lin_dec}
	need to be replaced by 
	\[
		L^{-1}A_iL^{-T}, \quad i =1,\dots,Q.
	\]
	In the following, we consider three numerical examples of this type from the rbMIT toolbox~\cite{Huynh2010a}. We only include brief explanations of the examples;
	more details can be found in~\cite{Huynh2010a} and~\cite{Patera2012}.

\begin{example}\label{example:square}
	This example 
	concerns a linear elasticity model of a parametrized body (see Figure~\ref{fig:squaregeometry}). 
	The parameter $\mu_1$ determines the width of the hole in the body while the parameter $\mu_2$ determines
	its Poisson's ratio. A discretization of the underlying PDE leads to the matrix $A(\mu)=\sum_{i=1}^Q \theta_i(\mu)A_i$, 
	with $Q=16$, $\mu=(\mu_1,\mu_2)$ and functions $\theta_i(\mu)$ 
	that arise from the parametrization of the geometry. 
	We choose $\calN = 2183$ and $D=[-0.1,0.1]\times[0.2,0.3]$. 
	As can be seen from Figure~\ref{fig:square}, 
	The results are similar to those presented in Example~\ref{example:random}, with
	Algorithm~\ref{alg:subspace} converging in $31$ iteration and Algorithm~\ref{alg:SCM} not reaching the desired tolerance.
\end{example}

\begin{figure}[ht]
	\resetsubfigs
	\centering
	   \begin{subfigure}[t]{0.48\textwidth}\centering
	    \begin{tikzpicture}[scale=2]
	\tikzstyle{dot}=[draw,shape=circle,fill=black,inner sep=0pt,minimum size=3pt]
	\tikzstyle{edge} = [draw,thick,-,black]
	\tikzstyle{dirichlet} = [edge,blue,dashed]
	
	\begin{scope}
		\draw (0,1) node[dot,label={[xshift=0, yshift=0]$(0,1)$}] (p0) {};
		\draw (1,1) node[dot,label={[xshift=0, yshift=0]$(1,1)$}] (p1) {};
		\draw (1,-1) node[dot,label={[xshift=0, yshift=-25]$(1,-1)$}] (p2) {};
		\draw (0,-1) node[dot,label={[xshift=0, yshift=-25]$(0,-1)$}] (p3) {};
		\draw (0,-0.4) node[dot,label={[xshift=-30, yshift=-10]$(0,\mu_1-\frac{1}{2})$}] (p4) {};
		\draw (0.5,-0.4) node[dot,label={[xshift=0, yshift=-25]$(\frac{1}{2},\mu_1-\frac{1}{2})$}] (p5) {};
		\draw (0.5,0.4) node[dot,label={[xshift=0, yshift=0]$(\frac{1}{2},\mu_1+\frac{1}{2})$}] (p6) {};
		\draw (0,0.4) node[dot,label={[xshift=-30, yshift=-10]$(0,\mu_1+\frac{1}{2})$}] (p7) {};
		
		\draw[edge] (p0) -- (p1);
		\draw[edge] (p1) -- (p2);
		\draw[edge] (p2) -- (p3);
		\draw[edge] (p3) -- (p4);
		\draw[edge] (p4) -- (p5);
		\draw[edge] (p5) -- (p6);
		\draw[edge] (p6) -- (p7);
		\draw[edge] (p7) -- (p0);
	\end{scope}

\end{tikzpicture}
		\subcaption{Geometry of the underlying PDE.}
		\label{fig:squaregeometry}
		\end{subfigure}
		~
		\begin{subfigure}[t]{0.48\textwidth}
		\includegraphics[width=\textwidth]{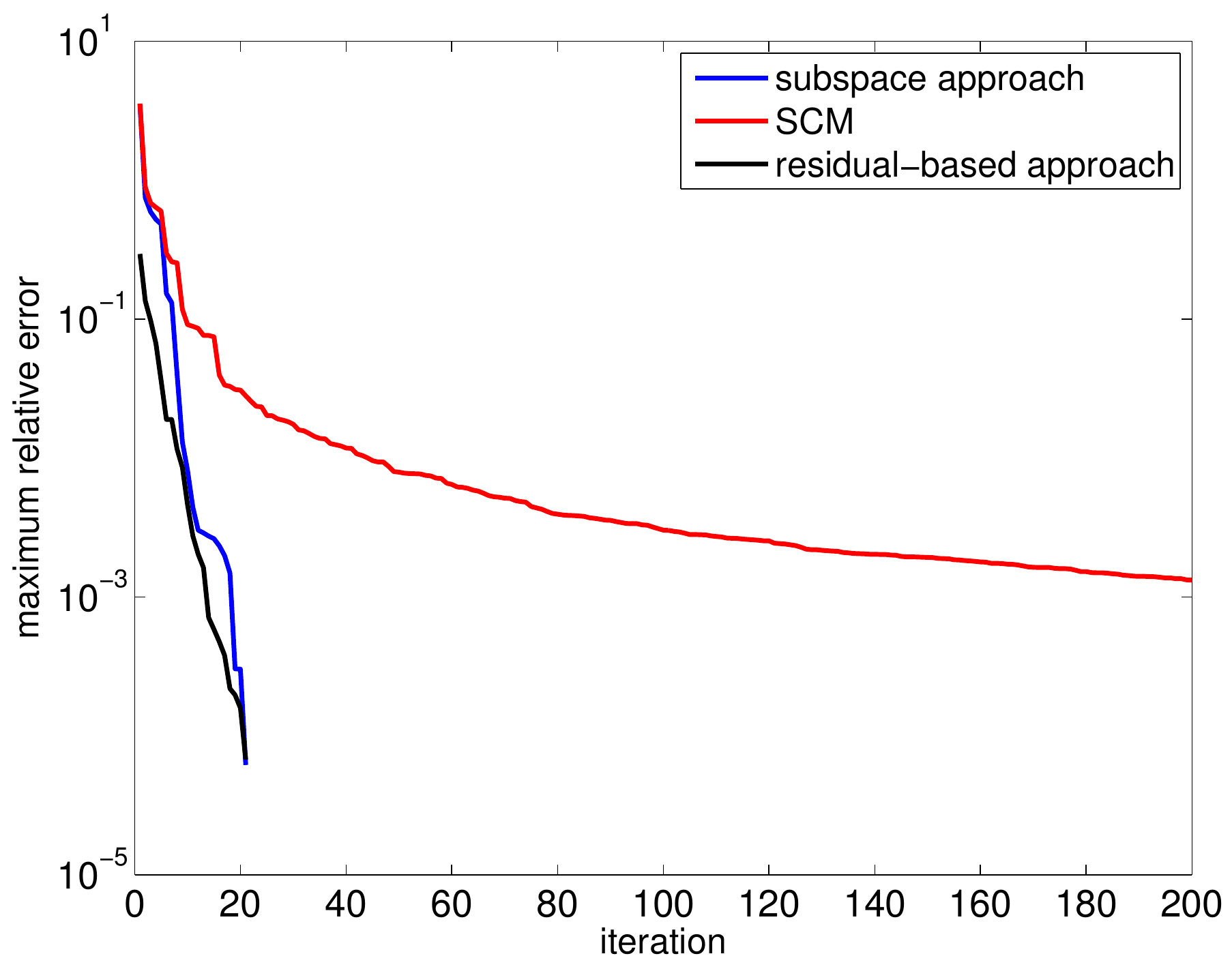}
		\subcaption{Convergence of the maximum relative error ratio~\eqref{eq:error_ratio}.}
		\label{fig:squareerrorratioconv}
		\end{subfigure}
		\\
	 	\begin{subfigure}[t]{0.48\textwidth}
		\includegraphics[width=\textwidth]{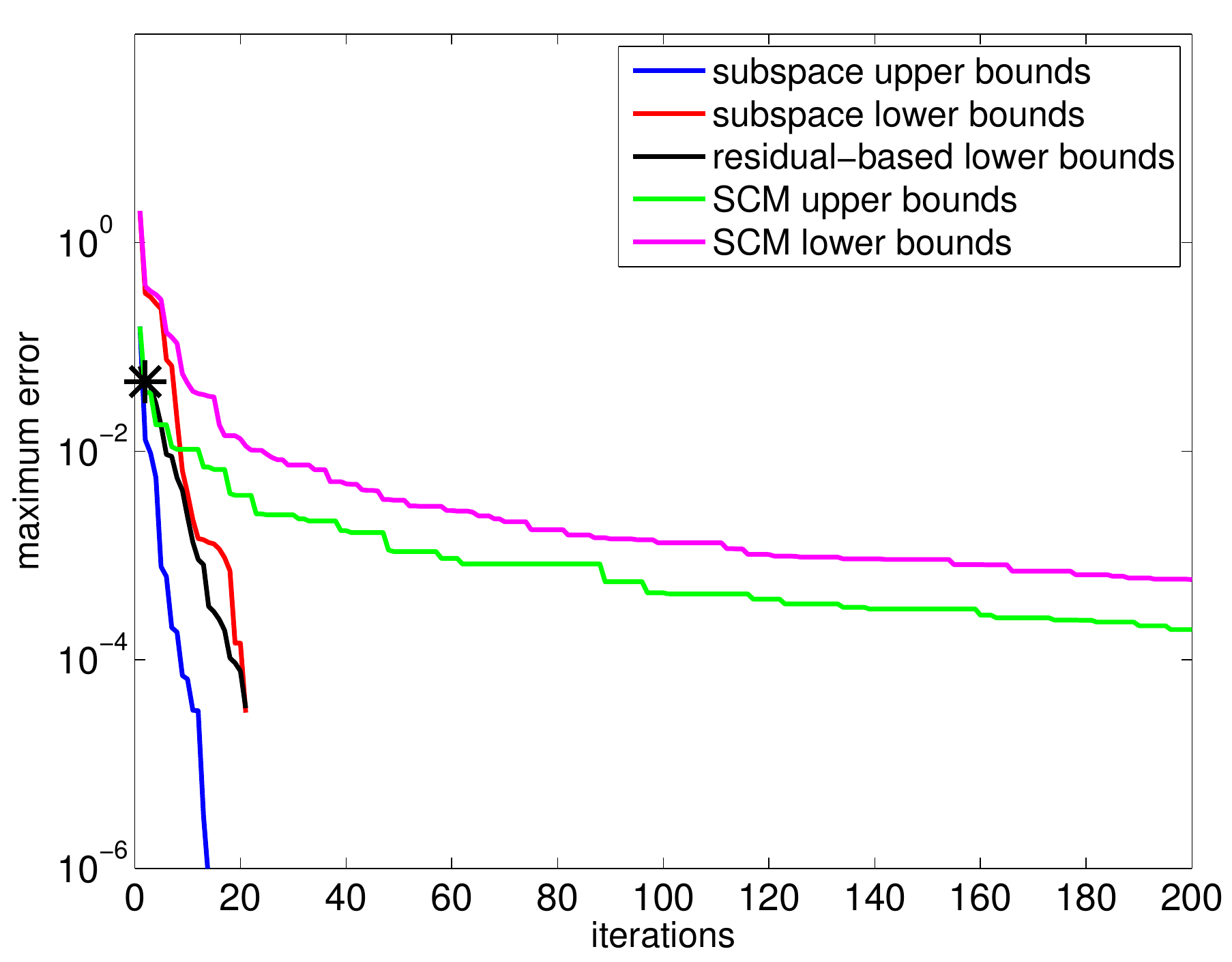}
		\subcaption{Convergence of the error~\eqref{eq:bounderror} for the bounds w.r.t. iteration.}
		\label{fig:squareboundconv}
		\end{subfigure}
		~
		\begin{subfigure}[t]{0.48\textwidth}
		\includegraphics[width=\textwidth]{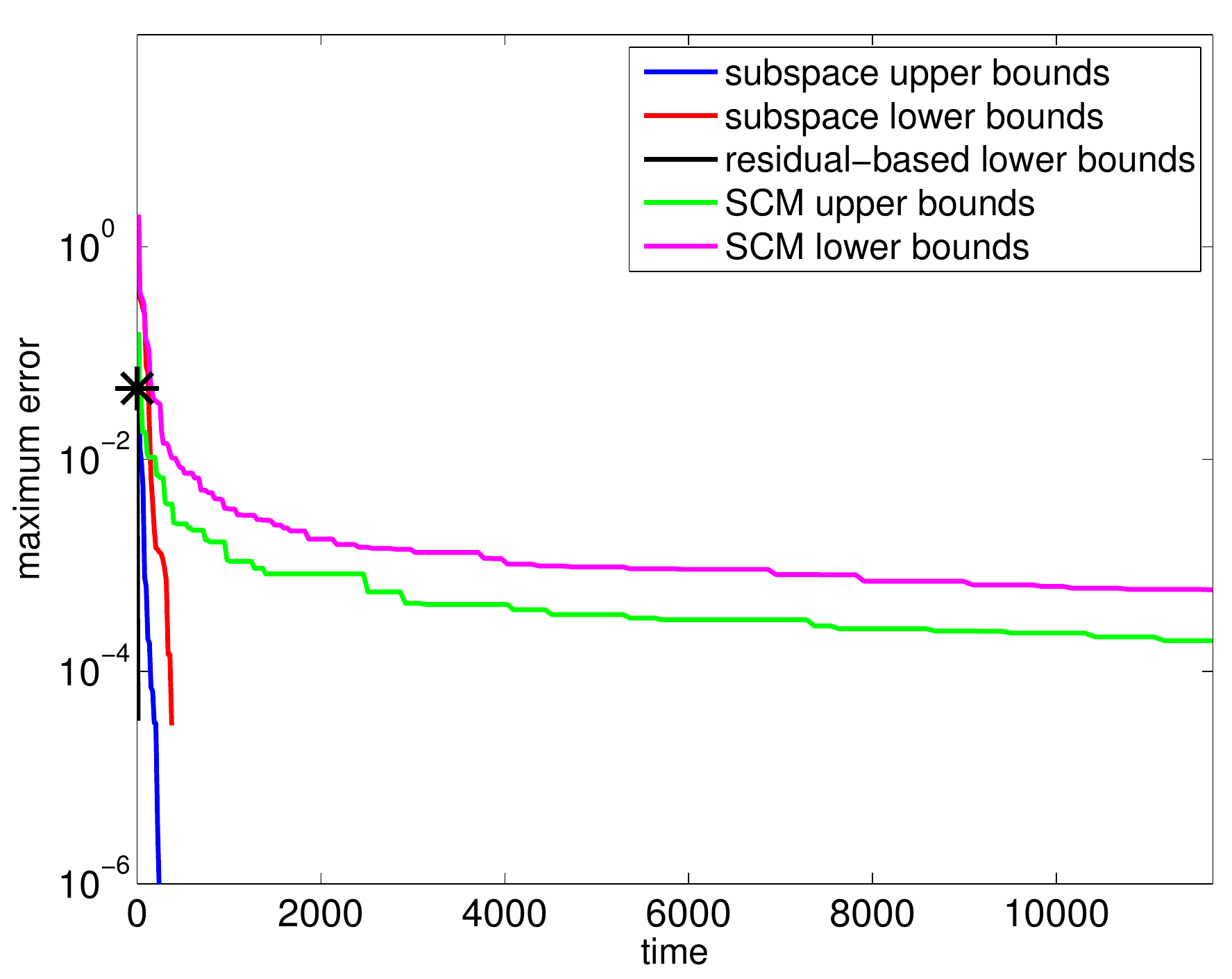}
		\subcaption{Convergence of the error~\eqref{eq:bounderror} for the bounds w.r.t. time.}
		\label{fig:squaretimeboundconv}
		\end{subfigure}
	\caption{Convergence plots for Algorithms~\ref{alg:SCM} and~\ref{alg:subspace} applied to Example~\ref{example:square}.}
	\label{fig:square}
\end{figure}

\begin{example}\label{example:fin}
	This example 	concerns a stationary heat equation on a parametrized domain (see Figure~\ref{fig:fingeometry}). 
	The parameter $\mu_1$ determines the coefficient in the Robin boundary conditions while the parameter $\mu_2$ determines
	the length of the domain. A discretization of the underlying PDE leads to the matrix $A(\mu)=\sum_{i=1}^Q \theta_i(\mu)A_i$, 
	with $Q=3$, $\mu=(\mu_1,\mu_2)$ and functions $\theta_i(\mu)$ 
	arising from the parametrization of the geometry and boundary conditions. 
	We choose $\calN = 1311$ and $D=[0.02,0.5]\times[2,8]$. 
As can be seen from Figure~\ref{fig:fin}, the results are similar to those observed in Examples~\ref{example:random} and~\ref{example:square}.
\end{example}

\begin{figure}[ht]
	\resetsubfigs
	\centering
	   \begin{subfigure}[t]{0.48\textwidth}\centering
	    \begin{tikzpicture}[scale=2]
	\tikzstyle{dot}=[draw,shape=circle,fill=black,inner sep=0pt,minimum size=3pt]
	\tikzstyle{edge} = [draw,thick,-,black]
	\tikzstyle{dirichlet} = [edge,blue,dashed]
	
	\begin{scope}
		\draw (-0.5,2) node[dot,label={[xshift=0, yshift=0]$(-\frac{1}{2},\mu_2)$}] (p0) {};
		\draw (0.5,2) node[dot,label={[xshift=0, yshift=0]$(\frac{1}{2},\mu_2)$}] (p1) {};
		\draw (0.5,0) node[dot,label={[xshift=0, yshift=-25]$(\frac{1}{2},0)$}] (p2) {};
		\draw (-0.5,0) node[dot,label={[xshift=0, yshift=-25]$(-\frac{1}{2},0)$}] (p3) {};
		
		\draw[edge] (p0) -- (p1);
		\draw[edge] (p1) -- (p2);
		\draw[edge] (p2) -- (p3);
		\draw[edge] (p3) -- (p0);
	\end{scope}

\end{tikzpicture}
		\subcaption{Geometry of the underlying PDE.}
		\label{fig:fingeometry}
        \end{subfigure}
        ~
        	\begin{subfigure}[t]{0.48\textwidth}
        	\includegraphics[width=\textwidth]{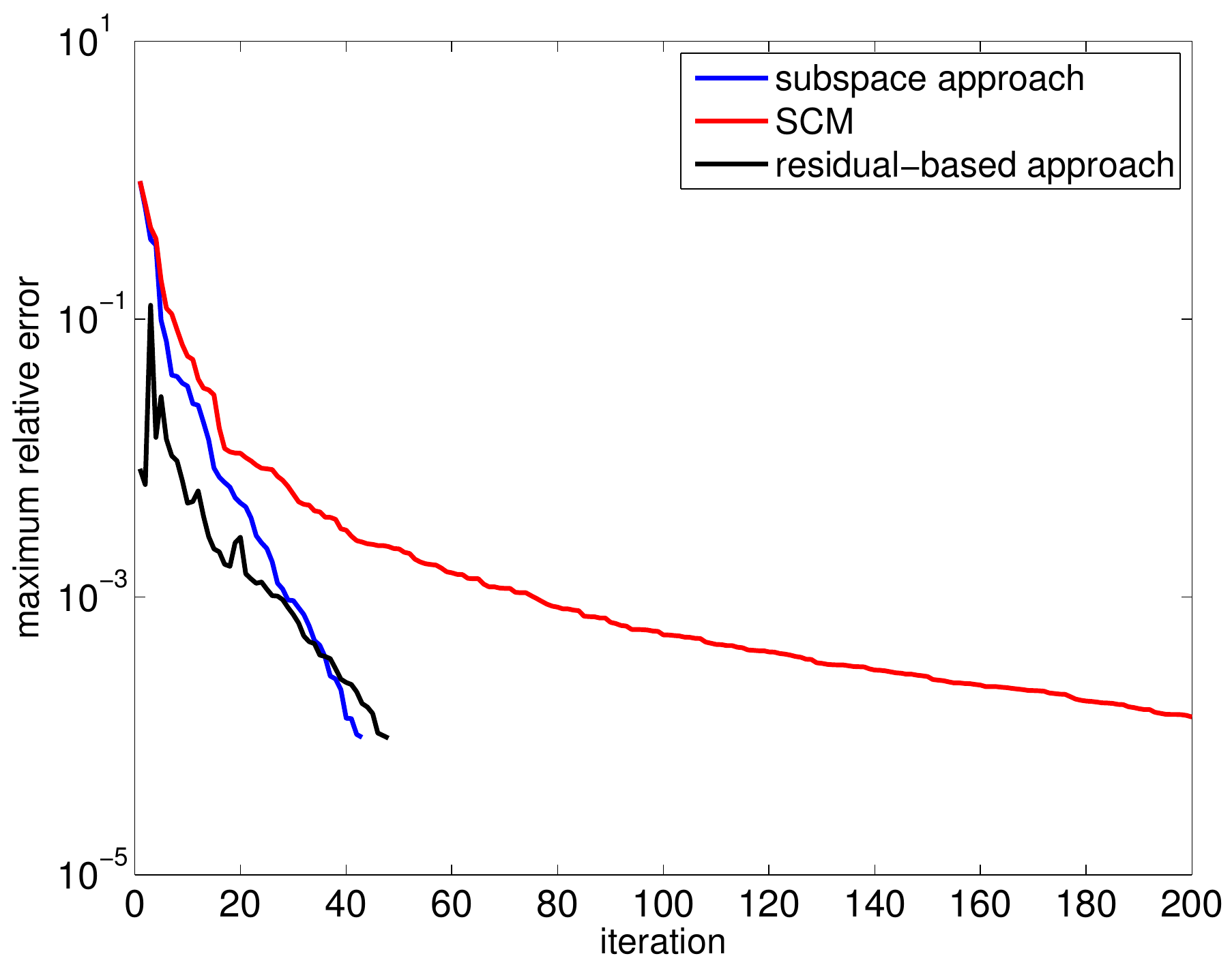}
		\subcaption{Convergence of the maximum relative error ratio~\eqref{eq:error_ratio}.}
		\label{fig:finerrorratioconv}
		\end{subfigure}
		\\
	 	\begin{subfigure}[t]{0.48\textwidth}
		\includegraphics[width=\textwidth]{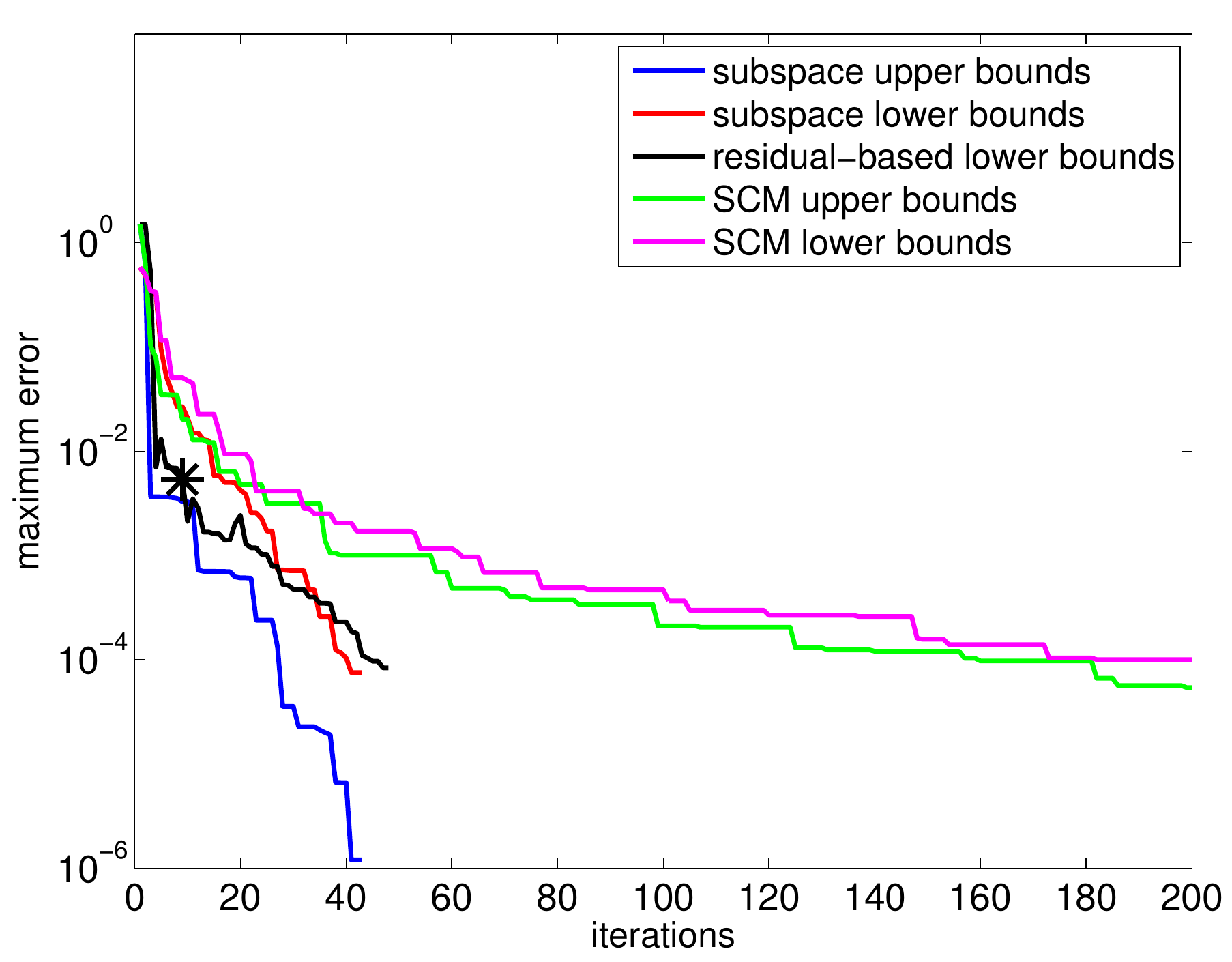}
		\subcaption{Convergence of the error~\eqref{eq:bounderror} for the bounds w.r.t. iteration.}
		\label{fig:finboundconv}
		\end{subfigure}
		~
	 	\begin{subfigure}[t]{0.48\textwidth}
		\includegraphics[width=\textwidth]{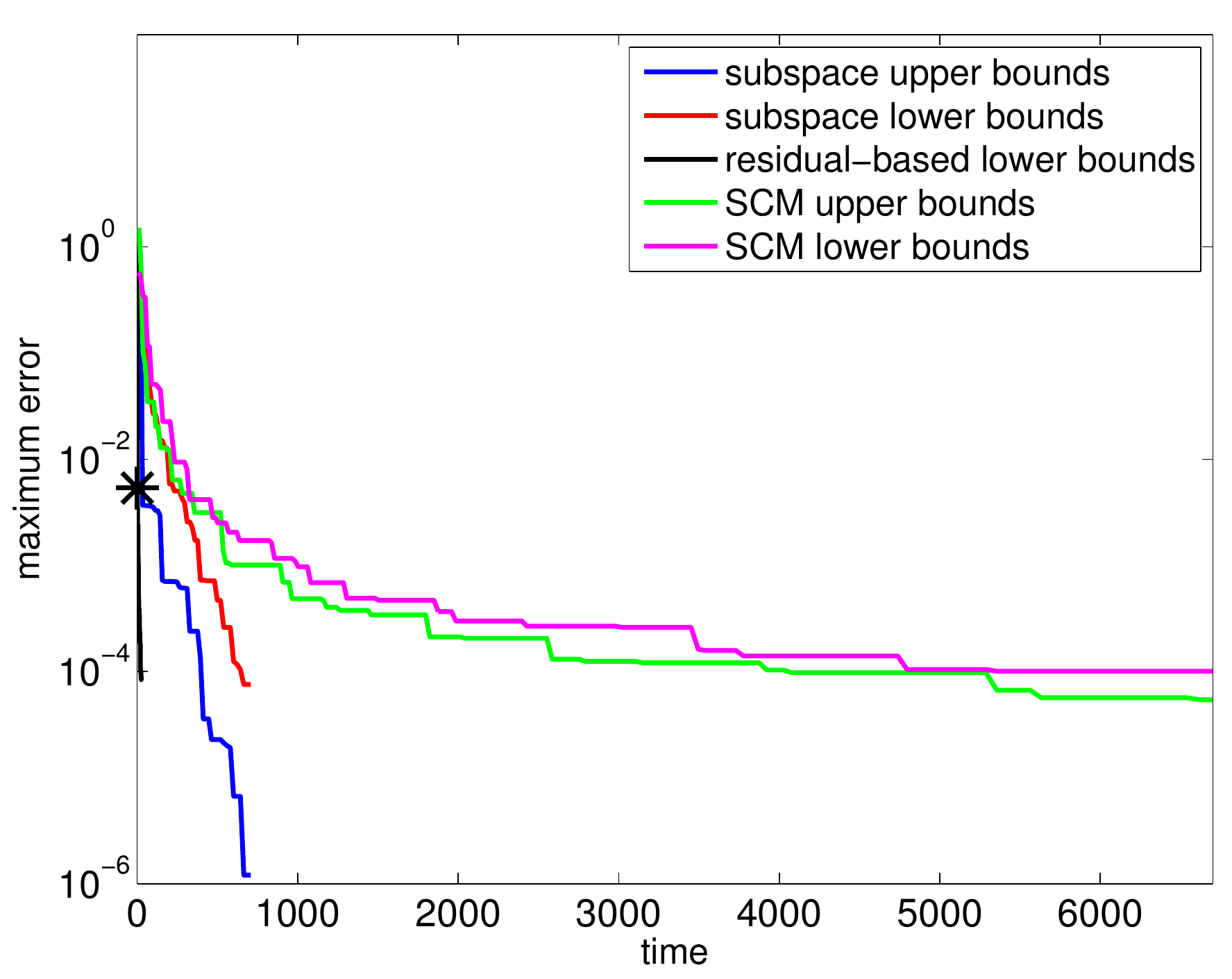}
		\subcaption{Convergence of the error~\eqref{eq:bounderror} for the bounds w.r.t. time.}
		\label{fig:fintimeboundconv}
		\end{subfigure}
	\caption{Convergence plots for Algorithm~\ref{alg:SCM} and~\ref{alg:subspace} applied to Example~\ref{example:fin}.}
	\label{fig:fin}
\end{figure}

\begin{example}\label{example:TBaniso}
	This example concerns a stationary heat equation on a square domain divided into blocks (see Figure~\ref{fig:TBanisogeometry}). 
	In each of the subdomains, one of the parameters $\mu_1, \dots,\mu_9$ determines a coefficient 
	of the PDE
	\begin{equation*}
		\mathrm{div}\left(\begin{bmatrix}1 & -\mu_i \\ -\mu_i & 1\end{bmatrix} \nabla u \right) = 0 \text{ on } \Omega_i, \quad i=1,\dots,9.
	\end{equation*}
	A discretization of the PDE leads to the matrix $A(\mu)=\sum_{i=1}^Q \theta_i(\mu)A_i$, 
	where $Q=10$, $\mu=(\mu_1, \dots,\mu_9)$ and functions $\theta_i(\mu)$ 
	arising from the parametrization of the PDE coefficients. 
	We choose $\calN = 1056$ and $D=[0.1,0.5]^9$. 
	As can be seen in 
	Figure~\ref{fig:TBaniso}, the performance of both Algorithms~\ref{alg:SCM} and~\ref{alg:subspace}
	is not satisfactory, due to the slow convergence of the SCM and subspace lower bounds. Only the subspace upper bounds converges at a satisfactory rate.
	In this example, the residual-based lower bounds clearly show their advantage. 
	They become reliable after only 31 iterations. 
\end{example}

\begin{figure}[ht]
	\resetsubfigs
	\centering
	   \begin{subfigure}[t]{0.48\textwidth}\centering
	    \begin{tikzpicture}[scale=1]
	\tikzstyle{dot}=[draw,shape=circle,fill=black,inner sep=0pt,minimum size=3pt]
	\tikzstyle{edge} = [draw,thick,-,black]
	\tikzstyle{dirichlet} = [edge,blue,dashed]
	
	\begin{scope}
	
		\draw[edge] (0,0) grid (3,3);
		
		\draw (0,0) node[dot,label={[xshift=0, yshift=-25]$(0,0)$}] (p0) {};
		\draw (0,3) node[dot,label={[xshift=0, yshift=0]$(0,1)$}] (p0) {};
		\draw (3,0) node[dot,label={[xshift=0, yshift=-25]$(1,0)$}] (p0) {};
		\draw (3,3) node[dot,label={[xshift=0, yshift=0]$(1,1)$}] (p0) {};
		
		\draw(0.5,0.5) node {$\mu_1$};
		\draw(0.5,1.5) node {$\mu_2$};
		\draw(0.5,2.5) node {$\mu_3$};
		\draw(1.5,0.5) node {$\mu_4$};
		\draw(1.5,1.5) node {$\mu_5$};
		\draw(1.5,2.5) node {$\mu_6$};
		\draw(2.5,0.5) node {$\mu_7$};	
		\draw(2.5,1.5) node {$\mu_8$};
		\draw(2.5,2.5) node {$\mu_9$};
	\end{scope}

\end{tikzpicture}
		\subcaption{Geometry of the underlying PDE.}
		\label{fig:TBanisogeometry}
        \end{subfigure}
        ~
        \begin{subfigure}[t]{0.48\textwidth}
        \includegraphics[width=\textwidth]{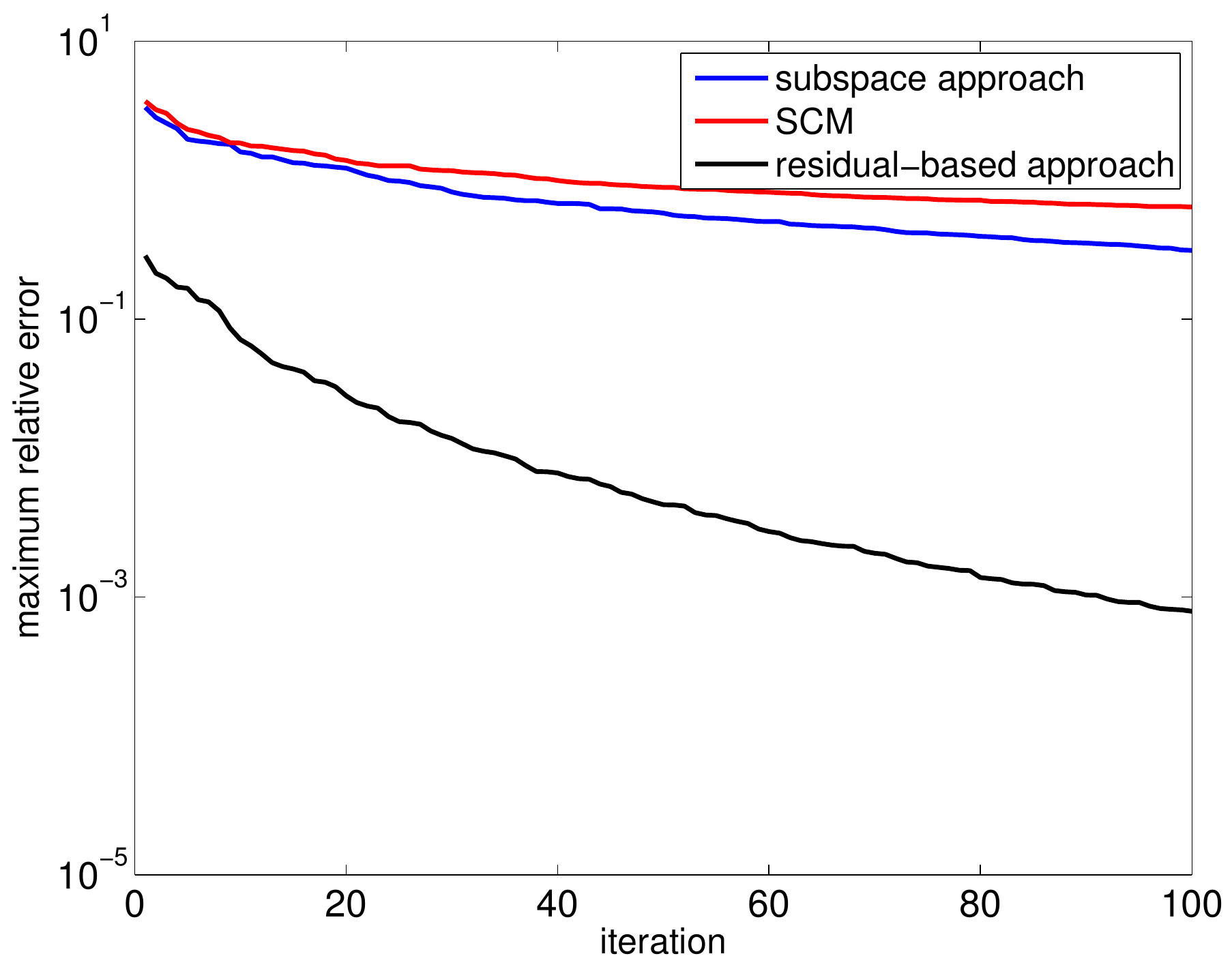}
		\subcaption{Convergence of the maximum relative error ratio~\eqref{eq:error_ratio}.}
		\label{fig:TBanisoerrorratioconv}
		\end{subfigure}
		\\
		\begin{subfigure}[t]{0.48\textwidth}
		\includegraphics[width=\textwidth]{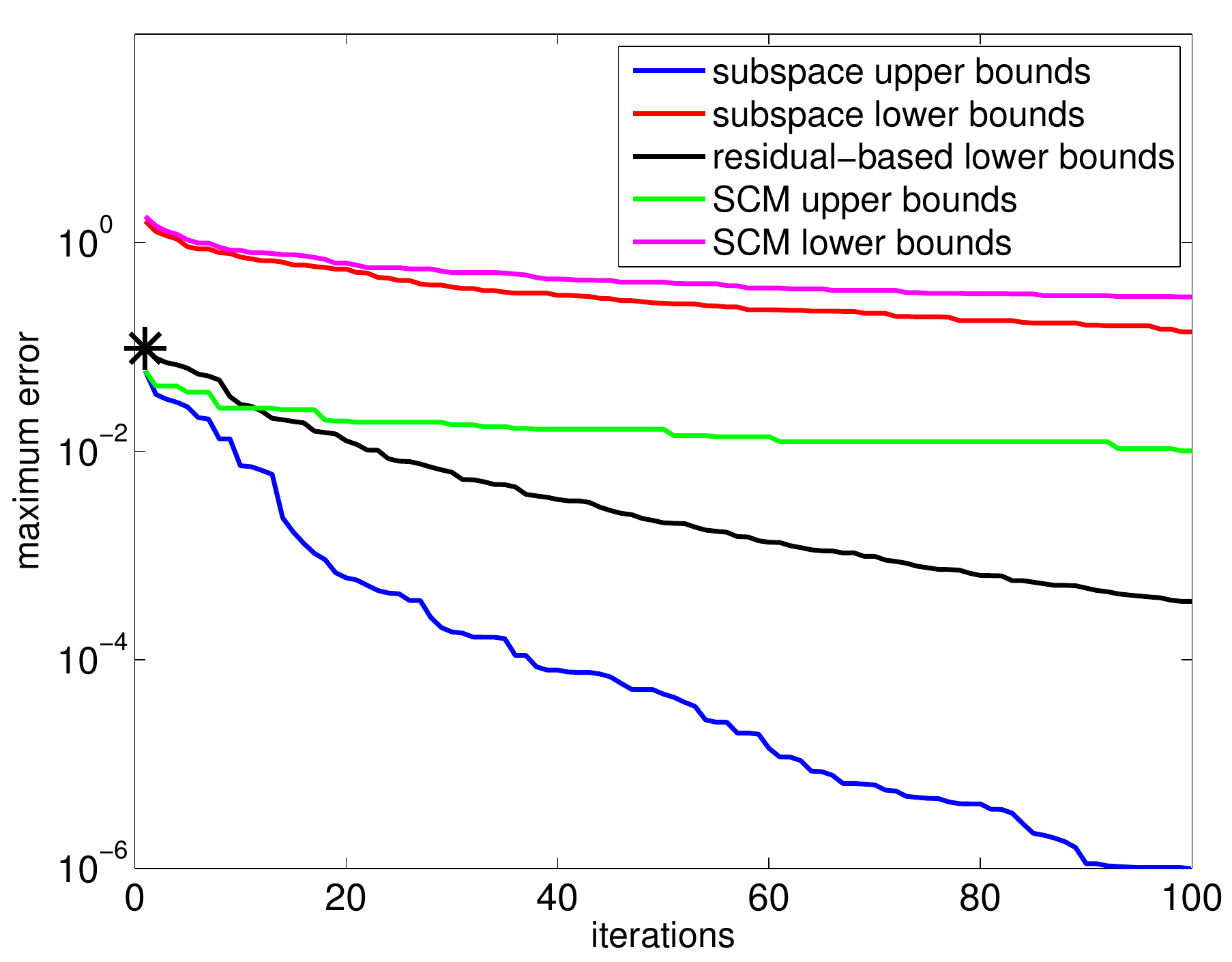}
		\subcaption{Convergence of the error~\eqref{eq:bounderror} for the bounds w.r.t. iteration.}
		\label{fig:TBanisoboundconv}
		\end{subfigure}
        ~
        \begin{subfigure}[t]{0.48\textwidth}
		\includegraphics[width=\textwidth]{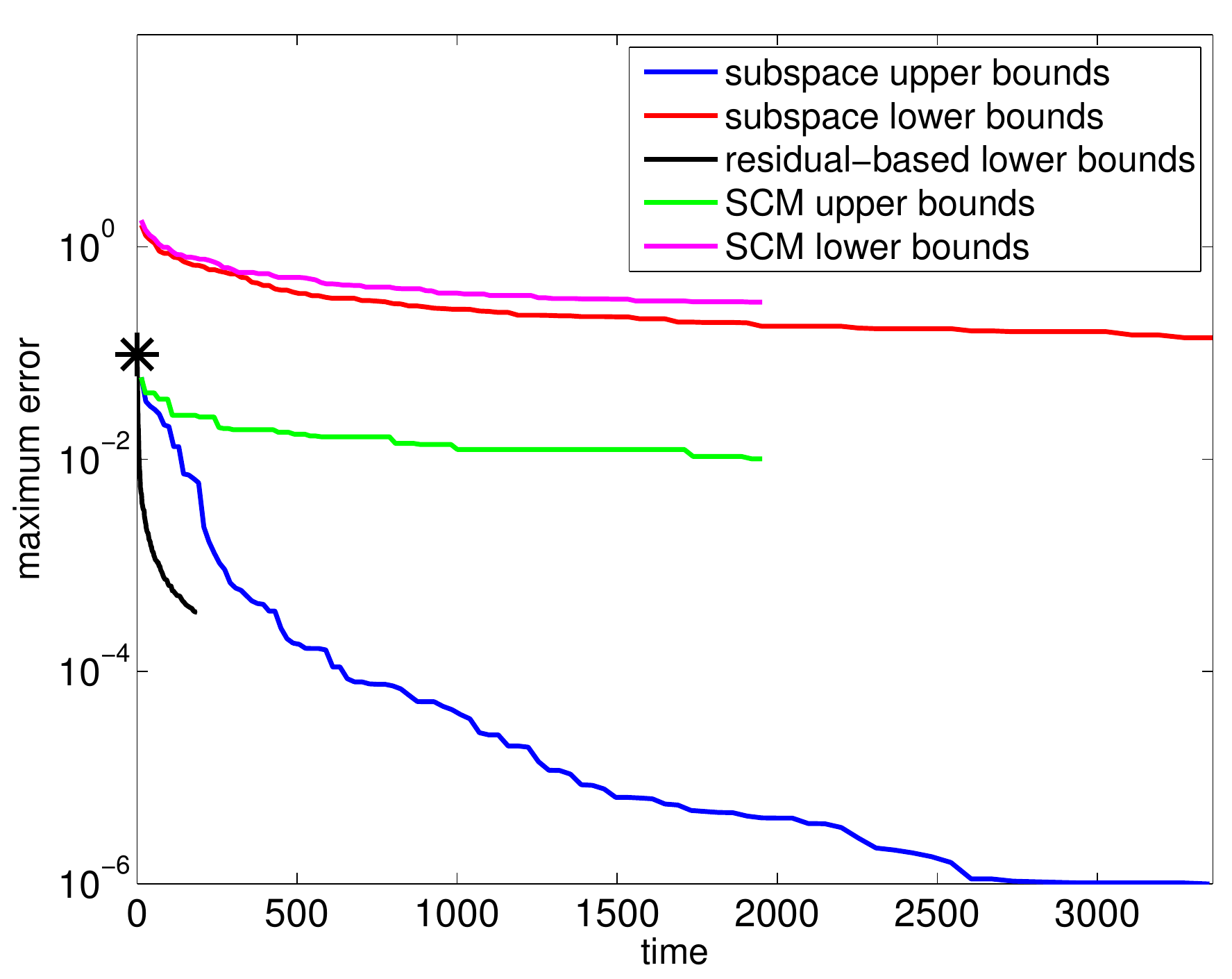}
		\subcaption{Convergence of the error~\eqref{eq:bounderror} for the bounds w.r.t. time.}
		\label{fig:TBanisotimeboundconv}
		\end{subfigure}
	\caption{Convergence plots for Algorithm~\ref{alg:SCM} and~\ref{alg:subspace} applied to Example~\ref{example:TBaniso}.}
	\label{fig:TBaniso}
\end{figure}

\section{Extension to computation of singular values} \label{sec:infsup}

In Section~\ref{sec:coercivity} we have seen that the computation of coercivity constants can be formulated in terms of~\eqref{eq:par_her_eigp}. For non-elliptic parametrized PDE one may have to resort to
the inf-sup constant~\cite{Huynh2010} defined as
\begin{equation}\label{eq:infsup_constant}
	\alpha(\mu) = \inf_{u \in X}\sup_{v \in X} \frac{a(u,v;\mu)}{\|u\|_{X}\|v\|_{X}},
\end{equation}
where $a(\cdot,\cdot,\mu)$ is the bilinear form in the weak formulation of the underlying PDE and $X$.
A finite element discretization of~\eqref{eq:infsup_constant} leads to the minimization problem
\begin{equation} \label{eq:mass_matrix_singvalue}
	\inf_{u\in \R^N}\sup_{v\in \R^N} \frac{u^TA(\mu)v}{\sqrt{u^TXu}\sqrt{v^TXv}}=\inf_{x\in \R^N}\sup_{y\in \R^N} \frac{x^TL^{-T}A(\mu)L^{-1}y}{\|x\|_2\|y\|_2}
\end{equation}
where, once again, $A(\mu)$ and $X=LL^T$ are the discretizations of $a(\cdot,\cdot,\mu)$ and $(\cdot,\cdot)_X$, respectively.
Minimizing~\eqref{eq:mass_matrix_singvalue} is equivalent to solving the singular value problem
\begin{equation*}
	\sigma_{\min}(L^{-1}A(\mu)L^{-T}),
\end{equation*}
which, in turn, is equivalent to computing
\begin{equation}\label{eq:min_sing_value}
	\lambda_{\min}(L^{-1}A(\mu)^TX^{-1}A(\mu)L^{-T}),
\end{equation}
since $\sigma_{\min}(A)=\sqrt{\lambda_{\min}(A^TA)}$. The expression~\eqref{eq:min_sing_value} can be recast in terms of~\eqref{eq:par_her_eigp}, 
with $Q^2$ terms, with the matrices $A_{i,j}$ and functions $\theta_{ij}(\mu)$ for $i,j=1,\dots,Q$  defined as 
\begin{eqnarray*}
	A_{ij} &=& L^{-1}A_i^TX^{-1}A_jL^{-T} \\
	\theta_{ij}(\mu) &=& \theta_i(\mu)\theta_j(\mu).
\end{eqnarray*}
The SCM algorithm has already been applied to~\eqref{eq:min_sing_value} but only with limited success, 
since having $Q^2$ terms in the affine decomposition of $A(\mu)$
further increases the computational cost by making the solution of the LP problem~\eqref{eq:minimization_bound} significantly harder. 
The faster convergence of the subspace-accelerated approach to~\eqref{eq:min_sing_value}
mitigates this cost to a certain extent.



\section{Conclusions} \label{sec:conclusion}

Solving a parametrized Hermitian eigenvalue problem can be computationally very hard and SCM is the most commonly used existing approach. 
We have proposed a new subspace-accelerated approach, given in Algorithm~\ref{alg:subspace}.
As can be seen in Section~\ref{sec:subspace_interpolation}, it has better theoretical properties than SCM. 
As can be seen in Section~\ref{sec:examples}, it also improves significantly on SCM in practice, for a
number of examples discussed in the literature, while having only slightly larger computational cost per 
iteration.
For problems with small gaps between the smallest eigenvalues, as in Example~\ref{example:TBaniso}, the convergence of the subspace lower bounds may still not be satisfactory. 
For such cases, we propose a heuristic approach using residual-based lower bounds.
The proposed approach can be extended to the solution of parametrized singular value problems.

\section{Acknowledgements} \label{sec:acknowledgements}
We thank Christine Tobler and Meiyue Shao for discussing various ideas and approaches with us. 


\appendix
\section{Proof of Lemma~\ref{lemma:function_analysis}} \label{sec:proofs}
As a composition of continuous functions, the function $f$ is clearly continuous. To prove monotonicity we distinguish two cases. First, let $\eta \ge \lambda^{(1)}_{\calV}$. Then
	\[
		f(\eta) = \lambda^{(1)}_{\calV} - 2\rho^2/\Big(\eta-\lambda^{(1)}_{\calV} + \sqrt{(\eta-\lambda^{(1)}_{\calV})^2 + 4\rho^2}\Big),
	\]
	which clearly increases as $\eta$ increases. Now, let $\eta \le \lambda^{(1)}_{\calV}$. Then
	\[
		f(\eta) = \eta - 2\rho^2/\Big(\lambda^{(1)}_{\calV}-\eta + \sqrt{(\eta-\lambda^{(1)}_{\calV})^2 + 4\rho^2}\Big)
	\]
	and
	\[
	 f^\prime(\eta) = 1 - \frac{ 2\rho^2 }{\Big(\lambda^{(1)}_{\calV}-\eta + \sqrt{(\lambda^{(1)}_{\calV}-\eta)^2 + 4\rho^2}\Big)\sqrt{(\lambda^{(1)}_{\calV}-\eta)^2 + 4\rho^2}}.
	\]
	Showing $f^\prime(\eta) \geq 0$, and thus establishing monotonicity, is equivalent to 
	\begin{eqnarray*}
		(\lambda^{(1)}_{\calV}-\eta)^2 + 4\rho^2 + (\lambda^{(1)}_{\calV}-\eta)\sqrt{(\lambda^{(1)}_{\calV}-\eta)^2+4\rho^2} &\geq& 2\rho^2 \\
		(\lambda^{(1)}_{\calV}-\eta)\sqrt{(\lambda^{(1)}_{\calV}-\eta)^2+4\rho^2} &\geq& 0 \geq -(\lambda^{(1)}_{\calV} -\eta)^2 - 2\rho^2, 
	\end{eqnarray*}
	which is trivially satisfied for $\lambda^{(1)}_{\calV} \geq \eta$. This completes the proof.

\bibliographystyle{plain}
\bibliography{anchp}

\end{document}